\documentclass[pdflatex,sn-mathphys-num]{sn-jnl}

\usepackage{enumerate}
\usepackage{bbm}
\usepackage{algpseudocode}

\usepackage{graphicx}%
\usepackage{multirow}%
\usepackage{amsmath,amssymb,amsfonts}%
\usepackage{amsthm}%
\usepackage{mathrsfs}%
\usepackage[title]{appendix}%
\usepackage{xcolor}%
\usepackage{textcomp}%
\usepackage{manyfoot}%
\usepackage{booktabs}%
\usepackage{algorithm}%
\usepackage{algorithmicx}%
\usepackage{algpseudocode}%
\usepackage{listings}%

\theoremstyle{thmstyleone}%
\newtheorem{theorem}{Theorem}
\newtheorem{lemma}[theorem]{Lemma}%
\newtheorem{corollary}[theorem]{Corollary}%
\newtheorem{remark}[theorem]{Remark}%

\theoremstyle{thmstyletwo}%

\theoremstyle{thmstylethree}%

\newcommand{\N}{\mathbb{N}}
\newcommand{\R}{\mathbb{R}}

\newcommand{\LL}{\mathcal{L}}

\newcommand{\sS}{\mathbb{S}}
\newcommand{\1}{\ensuremath{\mathbbm{1}}}

\newcommand{\kernel}{\mathcal N}

\newcommand{\sub}[1]{{\ensuremath\scriptscriptstyle\operatorname{#1}}}  
\newcommand{\Adm}{\mathcal Y_\text{adm}}
\newcommand{\dx}[1][x]{\ensuremath{\,{\rm{d}} #1}}

\def\norm#1{\hspace{0.2ex} \|#1\| \hspace{0.2ex}} 
 
\newcommand{\kommentar}[1]{}

\DeclareMathOperator{\trace}{tr} 

\begin{document}

\title[A fully globalized solver]{A fully globalized solver for discretized inverse elliptic coefficient problems with exact data}

\author*[1]{\fnm{Bastian} \sur{Harrach}}\email{harrach@math.uni-frankfurt.de}

\affil*[1]{\orgdiv{Institute for Mathematics}, \orgname{Goethe-University Frankfurt}, \orgaddress{\street{Robert-Mayer-Str. 10}, \city{Frankfurt}, \postcode{60325}, \country{Germany}}}

\abstract{
We consider finite-dimensional nonlinear inverse problems arising from finite element discretizations of elliptic inverse coefficient problems such as the Calder\'on problem with finitely many measurements and unknowns. Such inverse coefficient problems are notorious for their nonlinearity and ill-posedness, and numerical solvers tend to depend strongly on good initial values. In this work, we develop a new locally convergent algorithm with an explicit residual criterion that ensures convergence to the inverse problem solution, and a globalized variant that is guaranteed to automatically switch to the faster locally convergent algorithm after finitely many global search steps. 
}




\maketitle

\section{Disclaimer} This is a preliminary draft version. It is lacking references, an
introduction and a numerical-results section. 

\section{Setting and first results}\label{subsect:setting}

We consider the finite-dimensional nonlinear inverse problem
\begin{equation}\label{eq:IP}
\text{reconstruct } \quad \hat y\in [y^-,y^+]\subset \R^m \quad \text{ from } \quad \hat Y:=F(\hat y)\in \sS^l_{++},
\end{equation}
where $F:\ \R^m_{++}\to \sS^l$ is given by
\[
F(y)=B  A_y^{-1} B^T,\quad A_y:=\sum_{j=1}^m y_j A_j.
\]
Here, $l,n,m\in \N$, and $y^+,y^-\in \R^m$ satsify $y^+>y^->0$ componentwise. We assume a known box margin $\delta>0$, i.e.
\[
\hat y\in [y^-+\delta,y^+-\delta \1].
\]
Inequalities between vectors are always understood componentwise, and, for symmetric matrices $\succeq$ denotes the Loewner order. We assume that $0\neq A_i\in \sS^n_+$, $i=1,\ldots,m$, $A_\1\succ 0$, and that $B\in \R^{l\times n}$ has full row rank. Note that $A_\1\succ 0$ implies invertibility of $A_y$ for all $y\in (0,\infty)^m$, so that $F$ is well defined. Moreover, we assume that the inverse problem is uniquely solvable, and Lipschitz stable, i.e.\ that there exists $L>0$ so that
\begin{equation}
\label{eq:Lipschitz_stability}
\norm{y-z}_\infty\leq L \norm{F(y)-F(z)}_2 \quad \text{ for all } y,z\in [y^-,y^+].
\end{equation}
Let us stress that we do not assume knowledge of the Lipschitz constant $L$.

Notably, the discretized Calder\'on problem with finitely many measurements and unknowns, that models Electrical Impedance Tomography, can be written in this form as we demonstrate in subsection \ref{subsect:EIT_FEM}.  In such applications, $m$ is the number of unknowns, the measurements are given by a symmetric $l\times l$-matrix, and $n$ is the number of degrees of freedom of the FEM implementation, so that $n$ is usually very large (and $n\to \infty$ corresponds to solving the underlying PDE exactly). Typically, for such
inverse elliptic coefficient problems with finitely many unknowns, one can prove unique solvability and Lipschitz stability when sufficiently many measurements are used, cf.\ \cite{alberti2019calderon,harrach2019uniqueness,alberti2022infinite}.

\subsection{Monotonicity and convexity}

For ease of reference, we summarize some known properties of the forward operator $F:\ \R_{++}^m\to \sS^l$ \cite{harrach2021introduction,brojatsch2025required}. 

\begin{lemma}\label{lemma:Schur_argument}
For $y\in \R^m_{++}$, we have $F(y)\in \sS^l_{++}$, and, for all $Y\in \sS^l_{++}$, $Y-F(y)$ is the Schur complement of the upper left block in the block matrix
\[
M_y:=\begin{pmatrix} A_y & B^T\\ B & Y\end{pmatrix}\in \sS^{l+n}.
\]
Then, with $C:=B^T Y^{-1} B$,
\begin{align}\label{eq:det_Schur}
\det M_y= \det A_y \det (Y-F(y))=\det Y \det (A_y-C),
\end{align}
and the following equivalences hold
\begin{align*}
F(y)\preceq Y & \quad \text{ iff } \quad M_y\succeq 0
\quad \text{ iff } \quad A_y\succeq C, \text{ and }\\
F(y)\prec  Y & \quad \text{ iff } \quad M_y\succ 0
\quad \text{ iff } \quad A_y\succ C.
\end{align*}
Furthermore, $\dim\kernel(A_y-C)=\dim\kernel (Y-F(y))\leq l$, and
\[
\dim\kernel(A_y-C)=l\quad \text{ if and only if } \quad F(y)=Y.
\]
\end{lemma}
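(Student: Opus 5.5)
The whole statement follows from the two Schur complement factorizations of $M_y$, one eliminating each diagonal block. First, $A_y\succ 0$: for $y\in\R^m_{++}$ put $c:=\min_j y_j>0$; since each $A_j\succeq 0$ we get $A_y\succeq c\sum_j A_j=cA_\1\succ 0$. Then $A_y^{-1}\succ 0$, and because $B$ has full row rank, $x^TBA_y^{-1}B^Tx>0$ for every $x\neq 0$. Hence $F(y)\in\sS^l_{++}$.

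Next I would write the two congruence factorizations
\[
M_y=\begin{pmatrix} I & 0\\ BA_y^{-1} & I\end{pmatrix}\begin{pmatrix} A_y & 0\\ 0 & Y-F(y)\end{pmatrix}\begin{pmatrix} I & A_y^{-1}B^T\\ 0 & I\end{pmatrix},
\]
\[
M_y=\begin{pmatrix} I & B^TY^{-1}\\ 0 & I\end{pmatrix}\begin{pmatrix} A_y-C & 0\\ 0 & Y\end{pmatrix}\begin{pmatrix} I & 0\\ Y^{-1}B & I\end{pmatrix}.
\]
Both can be checked by direct multiplication, and they show that $Y-F(y)$ is the Schur complement of $A_y$ in $M_y$. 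The outer factors are unit triangular, so taking determinants gives \eqref{eq:det_Schur}. The outer factors are also transposes of each other, so each factorization is a congruence. By Sylvester's law of inertia, $M_y$ therefore has the same inertia as $\operatorname{diag}(A_y,Y-F(y))$ and as $\operatorname{diag}(A_y-C,Y)$.

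The equivalences follow from these congruences. Since $A_y\succ 0$, $M_y\succeq 0$ holds iff $Y-F(y)\succeq 0$, and $M_y\succ 0$ holds iff $Y-F(y)\succ 0$. Since $Y\succ 0$, $M_y\succeq 0$ holds iff $A_y-C\succeq 0$, and $M_y\succ 0$ holds iff $A_y-C\succ 0$.

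For the kernel statements I would compare nullities. Congruence preserves the dimension of the kernel, so
\[
\dim\kernel(M_y)=\dim\kernel(A_y)+\dim\kernel(Y-F(y))=\dim\kernel(Y-F(y)),
\]
and likewise $\dim\kernel(M_y)=\dim\kernel(A_y-C)+\dim\kernel(Y)=\dim\kernel(A_y-C)$. Hence $\dim\kernel(A_y-C)=\dim\kernel(Y-F(y))$. The matrix $Y-F(y)$ is $l\times l$, so this dimension is at most $l$, with equality iff $Y-F(y)=0$, that is, iff $F(y)=Y$.

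No step is a genuine obstacle. The only points needing care are the positivity of $A_y$ (which uses $A_\1\succ 0$ together with $y>0$) and stating the inertia and nullity invariance under congruence precisely. Those invariances are what make the kernel-dimension equality immediate, rather than requiring an explicit isomorphism between the two kernels.
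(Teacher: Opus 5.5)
Your proof is correct and takes essentially the same route as the paper. The paper cites the Schur complement determinant formula and Haynsworth inertia additivity, and your two explicit block congruence factorizations combined with Sylvester's law of inertia (and invariance of nullity under congruence) are the standard proof of those facts; you also spell out $A_y\succeq(\min_j y_j)A_\1\succ 0$, which the paper only remarks on in the setting.
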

\begin{proof}
Since $A_y\succ 0$, and $B$ has full row rank, $F(y)\succ 0$.
The Schur complement of the upper left block in $M_y$ is $Y- BA_y^{-1} B^T=Y-F(y)$, and the Schur complement of the lower right block in $M_y$ is $A_y-B^T Y^{-1} B=A_y-C$. The Schur complement formula for the determinant gives \eqref{eq:det_Schur}. Also,
using $A_y\succ 0$ and $Y\succ 0$, Haynsworth inertia additivity yields that
\begin{alignat*}{3}
M_y \succeq 0 &\quad \text{if and only if}\quad & F(y) &\preceq Y \quad \text{if and only if}\quad A_y &\succeq C,\\
M_y \succ 0 &\quad \text{if and only if}\quad & F(y) &\prec Y \quad \text{if and only if}\quad A_y &\succ C,
\end{alignat*}
and that 
\begin{align*}
\dim \kernel (M_y)&= \dim \kernel (A_y)+\dim \kernel (Y-F(y))= \dim \kernel (Y) + \dim \kernel (A_y-C).
\end{align*}
Hence, the assertion follows.
\end{proof}

\begin{lemma}\label{lemma:F_properties}
$F:\ \R^m_{++}\to \sS^l$ has the following properties:
\begin{enumerate}[(a)]
\item $F$ is infinitely differentiable. Its first and second derivatives are given by
\begin{align}
F'(y)e_i&=-BA_y^{-1} A_i A_y^{-1} B^T\in \sS^l, \label{eq:F_deriv}\\
F''(y)(e_i,e_j)&= B A_y^{-1} A_i A_y^{-1} A_j A_y^{-1} B^T + B A_y^{-1} A_j A_y^{-1} A_i A_y^{-1} B^T\in \sS^l \label{eq:F_2nd_deriv}
\end{align}
for all $y\in \R^m_{++}$, and $i,j=1,\ldots,m$.
\item $F$ is monotonically non-increasing, i.e.
\begin{alignat*}{2}
F(z)&\preceq F(y) && \quad \text{ for all } y,z\in \R^m_{++} \text{ with } y\leq z,\\
F'(y)d&\preceq 0 && \quad \text{ for all } y\in \R^m_{++},\ d\in \R^m_{+}.
\end{alignat*} 
\item $F$ is convex, 
\begin{alignat*}{2}
F(ty+(1-t)z)&\preceq tF(y)+(1-t)F(z) \quad \text{ for all } y,z\in \R^m_{++},\ t\in[0,1],\\
F(z)&\succeq F(y)+F'(y)(z-y) \quad \text{ for all } y,z\in \R^m_{++}.
\end{alignat*} 
\item 
The function $y\mapsto F(y^{-1})$, $\R^m_{++}\to \sS^l$ is concave,
\begin{align}\label{eq:Ikehata}
F(z)&\succeq F(y)+F'(z)\left(\frac{z}{y}(z-y)\right) \quad \text{ for all } y,z\in \R^m_{++},
\end{align} 
where inversion and multiplication of vectors are interpreted componentwise. 
\item $F$ is positively homogeneous of degree $-1$,
\begin{equation}\label{eq:F_hom}
F'(y)y=-F(y) \quad \text{ and } \quad F(ty)=t^{-1} F(y) \quad \text{ for all } y\in \R^m_{++}, t>0.
\end{equation}
\item \label{lemma:F_properties_A}
Define $\mathcal A\in \LL(\sS^n,\R^m)$ by 
\[
\mathcal A(X):=\left( \langle A_j,X\rangle \right)_{j=1,\ldots,m}\in \R^m
\quad \text{ for all } X\in \sS^n.
\]
Then its adjoint $\mathcal A^*\in \LL(\R^m,\sS^n)$ fulfills
\[
\mathcal A^* d=\sum_{j=1}^m d_j A_j=A_d \quad \text{ for all } d\in \R^m.
\]
\item For all $y\in\R^m_{++}$, the adjoint of $F'(y)\in \LL(\R^m,\sS^l)$ is $F'(y)^*\in  \LL(\sS^l,\R^m)$, where
\[
F'(y)^*Z=-\mathcal A(A_y^{-1} B^T Z BA_y^{-1})\in \R^m \quad \text{ for all } Z\in \sS^l,
\]
i.e., for all $Z\in \sS^l$, $F'(y)^* Z=v\in \R^m$ fulfills
\[
v_i=\langle F'(y)e_i, Z \rangle=-\langle  A_i, A_y^{-1} B^T Z BA_y^{-1}\rangle, \quad i=1,\ldots,m.
\]
In particular
\[
F'(y)^*I=(\trace F'(y)e_i)_{i=1}^m=\nabla \trace F(y)\in \R^m.
\]
\end{enumerate}
\end{lemma}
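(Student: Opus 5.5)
All parts follow from two facts: the calculus of the matrix inverse on the open cone of positive definite matrices, and the factorization $F(y)=(A_y^{-1}B^T)^T A_y (A_y^{-1}B^T)$, which rewrites every quantity as a quadratic form. The plan is to prove (a), (e), (f), (g) by direct computation, and then derive (b), (c), (d) from them.

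For (a), $y\mapsto A_y$ is linear and $A_y\succ 0$ on $\R^m_{++}$, because $A_y\succeq (\min_j y_j) A_\1\succ 0$. Matrix inversion is $C^\infty$ on invertible matrices with $\partial (A^{-1})[H]=-A^{-1}HA^{-1}$. Composing and differentiating once more (product rule) gives \eqref{eq:F_deriv} and \eqref{eq:F_2nd_deriv}, both symmetric.

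For (e), $A_{ty}=tA_y$, so $F(ty)=t^{-1}F(y)$. Differentiating at $t=1$ gives $F'(y)y=-F(y)$.

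For (f), $\langle \mathcal A(X),d\rangle=\sum_j d_j\langle A_j,X\rangle=\langle A_d,X\rangle$.

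For (g), cyclicity of the trace gives $\langle F'(y)e_i,Z\rangle=-\trace(BA_y^{-1}A_iA_y^{-1}B^TZ)=-\langle A_i, A_y^{-1}B^TZBA_y^{-1}\rangle$. Taking $Z=I$ gives $(\trace F'(y)e_i)_i$, which is the gradient of $\trace F$ by (a).

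For (b), with $d\ge 0$ we have $F'(y)d=-BA_y^{-1}A_dA_y^{-1}B^T\preceq 0$, since $A_d\succeq 0$ and congruence preserves semidefiniteness. The monotonicity $F(z)\preceq F(y)$ for $y\le z$ then follows by integrating $F'$ along the segment from $y$ to $z$, which stays in $\R^m_{++}$. Alternatively, $A_z\succeq A_y$ gives $A_z^{-1}\preceq A_y^{-1}$.

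For (c), the second derivative in direction $d$ is
\[
F''(y)(d,d)=2BA_y^{-1}A_dA_y^{-1}A_dA_y^{-1}B^T=2W^TA_y^{-1}W\succeq 0,\qquad W:=A_dA_y^{-1}B^T .
\]
So $t\mapsto \langle F(y+t(z-y))v,v\rangle$ is convex for every $v$. This gives the Jensen-type inequality and the tangent inequality $F(z)\succeq F(y)+F'(y)(z-y)$. It holds for arbitrary $d\in\R^m$ because only the positivity of $A_y^{-1}$ is used.

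Part (d) is the main obstacle, and I would prove it by a direct quadratic-form computation. Fix $v\in\R^l$ and set $u_y:=A_y^{-1}B^Tv$ and $u_z:=A_z^{-1}B^Tv$. Then
\[
v^TF(y)v=u_y^TA_yu_y=2u_y^TB^Tv-u_y^TA_yu_y=\max_{u}\bigl(2u^TB^Tv-u^TA_yu\bigr).
\]
Testing with $u=u_z$ gives
\[
v^TF(y)v\ge 2u_z^TA_zu_z-u_z^TA_yu_z=v^TF(z)v+u_z^T(A_z-A_y)u_z,
\]
which is the same as the convexity inequality. For \eqref{eq:Ikehata} I need the reverse direction.

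Write $g(w):=F(w^{-1})$. The claim \eqref{eq:Ikehata} is exactly the concavity tangent inequality $g(b)\preceq g(a)+g'(a)(b-a)$ with $a=z^{-1}$ and $b=y^{-1}$. Indeed, $g'(a)h=F'(a^{-1})(-h/a^2)$, and $-(y^{-1}-z^{-1})z^2=\frac{z}{y}(z-y)$. So it suffices to show $g$ is concave on $\R^m_{++}$.

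The dual (Thomson) variational principle gives this. For the energy in direction $v$,
\[
v^TF(y)v=\min\Bigl\{\textstyle\sum_j y_j^{-1}\, q_j : \text{fluxes compatible with } B^Tv\Bigr\},
\]
which I would realize by factoring $A_j=R_j^TR_j$ and writing
\[
v^TF(y)v=\min\Bigl\{\textstyle\sum_j y_j^{-1}|\sigma_j|^2:\ \sum_j R_j^T\sigma_j=B^Tv\Bigr\}.
\]
This is justified by Lagrange duality, because $A_y=\sum_j y_jR_j^TR_j$ is invertible. In this form $v^TF(w^{-1})v$ is a minimum of functions linear in $w$, hence concave in $w$, which yields (d).
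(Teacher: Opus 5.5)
Your proof is correct. For (a)--(c) and (e)--(g) it coincides with the paper's, which only cites that matrix inversion is smooth, operator monotone decreasing and operator convex on $\sS^n_{++}$; your explicit formulas, e.g.\ $F''(y)(d,d)=2W^TA_y^{-1}W\succeq 0$, verify exactly these facts.

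The genuine difference is (d). The paper obtains \eqref{eq:Ikehata} directly from Ikehata's completion of squares, which it cites from the EIT literature. With $u_y=A_y^{-1}B^Tv$ and $u_z=A_z^{-1}B^Tv$ one has $v^T(F(y)-F(z))v=(u_y-u_z)^TA_y(u_y-u_z)+u_z^T(A_z-A_y)u_z$. Completing the square pixel by pixel then gives $v^T(F(y)-F(z))v\ge\sum_j\frac{y_j}{z_j}(z_j-y_j)\,u_y^TA_ju_y$, which is \eqref{eq:Ikehata} with $y,z$ exchanged. Concavity of $y\mapsto F(y^{-1})$ is then just the tangent-inequality reading of this estimate.

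You instead use the Thomson principle to write $v^TF(w^{-1})v$ as a minimum of functions linear in $w$ over a $w$-independent set. This makes concavity structural and nicely dual to your Dirichlet-principle argument for (c). The cost is the factorization $A_j=R_j^TR_j$, a justification of the min formula, and a chain-rule step back to \eqref{eq:Ikehata}.

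That chain-rule step contains a sign slip: $-(y^{-1}-z^{-1})z^2=-\frac{z}{y}(z-y)$, so $g'(a)(b-a)=-F'(z)\bigl(\frac{z}{y}(z-y)\bigr)$. Only with this sign does the tangent inequality $F(y)\preceq F(z)-F'(z)\bigl(\frac{z}{y}(z-y)\bigr)$ rearrange to \eqref{eq:Ikehata}.

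You can avoid the chain rule altogether. Test the Thomson principle at $y$ with the admissible flux $\sigma_j:=z_jR_ju_z$ to get $v^TF(y)v\le\sum_j\frac{z_j^2}{y_j}u_z^TA_ju_z$. After subtracting $v^TF(z)v=\sum_j z_ju_z^TA_ju_z$, this is exactly \eqref{eq:Ikehata}. It mirrors your test $u=u_z$ in the Dirichlet principle, and once the minimum principle is proved by completing a square, it reproduces the paper's argument.
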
 
\begin{proof}
$F(y)=B A_y^{-1} B^T$ is a composition of linear mappings and inversions of a positive definite matrix. 
The assertions (a)--(c) thus follow from the fact that the matrix inversion map is infinitely differentiable, monotonically non-increasing, and convex on $\sS^n_{++}$. (d) can be shown via a completion-of-squares argument of Ikehata \cite{ikehata1998size}, cf.\ \cite[Lemma~2.1]{harrach2010exact} for a short proof for Neumann-to-Dirichlet operators in EIT that immediately carries over to this finite-dimensional setting. (e)--(g) immediately follow from the respective definitions and the cyclicity of the trace in the Frobenius inner products. 
\end{proof}

\begin{remark}\label{rem:resolvent}
For ease of reference, let us also record the following useful resolvent identity. It is readily verified by direct multiplication that two invertible matrices $M_1$ and $M_2$ satisfy
\begin{equation}\label{eq:resolvent}
M_2^{-1}-M_1^{-1}=
-M_1^{-1} (M_2-M_1) M_1^{-1}
+ M_1^{-1} (M_2-M_1) M_2^{-1} (M_2-M_1) M_1^{-1}.
\end{equation}
\end{remark}

\subsection{Semidefinite programming}\label{subsect:prelims_sdp}

We include the box constraints into the linear matrix inequality, and reverse signs, to obtain
\[
\Adm=\{ y\in [y^-,y^+]:\ A_y\succeq C\}
= \{ y\in \R^m:\ \tilde A_y\preceq \tilde C\},
\]
where $\tilde A_y:=\sum_{i=1}^m y_i \tilde A_i\in \sS^{\tilde n}$, $\tilde n:=n+2m$,
\begin{equation}\label{eq:tilde_A_and_C}
\tilde A_i:=\begin{pmatrix} -A_i & 0 & 0\\ 0 & \operatorname{diag}(e_i) & 0\\
0 & 0 & -\operatorname{diag}(e_i)
\end{pmatrix},\ \tilde C:=\begin{pmatrix} -C & 0 & 0\\ 0 & \operatorname{diag}(y^+) & 0\\
0 & 0 & -\operatorname{diag}(y^-)
\end{pmatrix}.
\end{equation}

For all $d\in \R^m$, the middle block of $\tilde A_d$ is $\operatorname{diag}(d)$ so that
\[
\tilde A_d=\sum_{j=1}^m d_j \tilde A_j=0 \quad \text{ implies } \quad d=0.
\]
Hence, the matrices $\tilde A_1,\ldots,\tilde A_m$ are linearly independent. 

Given $b\in \R^m$, we consider the semidefinite program (SDP)
\[
b^T y\to \text{max!} \quad \text{ s.t. } \quad \tilde A_y\preceq \tilde C.
\]

In the literature, this is commonly called the \emph{dual SDP} in canonical form. The corresponding \emph{primal SDP} is to find $\tilde X\in \sS^{\tilde n}$ that minimizes
\[
\langle \tilde C,\tilde X\rangle \to \text{min!} \quad \text{s.t.} \quad
\tilde X\succeq 0,\ \langle \tilde A_i,\tilde X\rangle=b_i\ \text{ for all }
i=1,\ldots,m.
\]
Weak duality always holds, i.e., for all primal feasible $\tilde X$, and all dual feasible $y$,
\[
\langle \tilde C,\tilde X\rangle-b^Ty=\langle \tilde C-\sum_{i=1}^m y_i \tilde A_i, \tilde X\rangle \geq 0.
\]

We first show existence of feasible points and strong duality. For $y\in \R^m_{++}$ we define
\[
\tilde X_y:=\begin{pmatrix} X_y & 0 & 0\\ 0 & 0 & 0\\ 0 & 0 & 0\end{pmatrix}\in \sS^{\tilde n}_+
\quad \text{ with } \quad X_y:= A_{y}^{-1} B^T B A_{y}^{-1}\in \sS^n_+
\]

\begin{lemma}\label{lemma:sdp_feasible}
\begin{enumerate}[(a)]
\item The point $y^\circ:=y^+-\frac{\delta}{2}\1$ is strictly dual feasible.
\item For $X\in \sS^n_+$ with $\mathcal A(X)=-b$, the zero extension $\tilde X\in \sS^{\tilde n}_+$ is primal feasible. Moreover, for a dual feasible $y\in \Adm$, we define, for $1>\epsilon>0$, 
\begin{align*}
\tilde X^\epsilon:= \begin{pmatrix} X+\epsilon I & 0 & 0\\ 0 &   \epsilon \operatorname{diag}(\1+{\mathcal A}(I)) & 0\\
0 & 0 &  \epsilon I\end{pmatrix}
\quad \text{ and } \quad
y^\epsilon:=(1-\epsilon) y+\epsilon y^\circ.
\end{align*}
Then $\tilde X^\epsilon$ is strictly primal feasible, and $y^\epsilon$ is strictly dual feasible.
\item If $b=F'(y)^*I$ for some $y\in \R^m_{++}$, then $\tilde X_y$
is primal feasible, and $\tilde X_y^\epsilon$ is strictly feasible. Moreover, strong duality holds between the primal and dual SDPs, their optimal values are attained and equal. 
\end{enumerate}
\end{lemma}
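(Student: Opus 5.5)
The plan is to verify the three parts directly from the block structure \eqref{eq:tilde_A_and_C}. Throughout, I take $C=B^T\hat Y^{-1}B$ with $\hat Y=F(\hat y)$, so that, by Lemma~\ref{lemma:Schur_argument}, $A_{\hat y}\succeq C$, i.e.\ $\hat y\in\Adm$.

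For (a), $\tilde C-\tilde A_{y^\circ}$ is block diagonal with blocks
\[
A_{y^\circ}-C,\qquad \operatorname{diag}(y^+-y^\circ),\qquad \operatorname{diag}(y^\circ-y^-),
\]
and I will show that each block is positive definite.
\begin{itemize}
\item The second block is $\frac{\delta}{2}I\succ 0$.
\item For the third block, the box margin gives $y^-+\delta\1\le\hat y\le y^+-\delta\1$, hence $y^+-y^-\ge 2\delta\1$ and $y^\circ-y^-\ge \frac{3\delta}{2}\1>0$.
\item For the first block, $y^\circ-\hat y\ge \frac{\delta}{2}\1$. Since all $A_j\succeq 0$, this gives
\[
A_{y^\circ}-A_{\hat y}=A_{y^\circ-\hat y}\succeq \tfrac{\delta}{2}A_\1\succ 0 .
\]
Combined with $A_{\hat y}\succeq C$, this yields $A_{y^\circ}\succ C$.
\end{itemize}

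For (b), the zero extension satisfies $\langle\tilde A_i,\tilde X\rangle=-\langle A_i,X\rangle=b_i$, and it is clearly positive semidefinite. For $\tilde X^\epsilon$, I compute the three blockwise contributions:
\[
\langle\tilde A_i,\tilde X^\epsilon\rangle=-\langle A_i,X\rangle-\epsilon\,\trace A_i+\epsilon\bigl(1+\mathcal A(I)_i\bigr)-\epsilon=b_i,
\]
since $\mathcal A(I)_i=\trace A_i$. Moreover $\tilde X^\epsilon\succ 0$, because $X+\epsilon I\succ 0$ and $1+\trace A_i\ge 1>0$. For $y^\epsilon$, I use linearity in $y$:
\[
\tilde C-\tilde A_{y^\epsilon}=(1-\epsilon)(\tilde C-\tilde A_y)+\epsilon(\tilde C-\tilde A_{y^\circ}).
\]
This is a positive semidefinite matrix plus a positive definite one, hence positive definite by (a).

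For (c), Lemma~\ref{lemma:F_properties}(g) gives $b=F'(y)^*I=-\mathcal A(A_y^{-1}B^TBA_y^{-1})=-\mathcal A(X_y)$ with $X_y\succeq 0$. Part (b) then applies with $X=X_y$, so $\tilde X_y$ is primal feasible and $\tilde X_y^\epsilon$ is strictly primal feasible. On the dual side, (a) supplies a strictly feasible point $y^\circ$. Since the $\tilde A_i$ are linearly independent, as shown above, the standard conic duality theorem applies: Slater's condition holding on both sides gives zero duality gap and attainment of both optimal values.

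The only delicate step is invoking this duality theorem with its exact hypotheses, namely strict feasibility of both problems and linear independence of the constraint matrices. All the remaining steps are direct blockwise computations.
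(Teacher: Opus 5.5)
Your proposal is correct and follows the paper's proof essentially step for step. In (a) you use $A_{y^\circ}\succ A_{\hat y}\succeq C$; in (b) you use the blockwise trace computation and the convex-combination argument for $y^\epsilon$; in (c) you use $b=-\mathcal A(X_y)$ together with strict primal and dual feasibility and standard SDP duality. Your explicit check of the two box-constraint blocks of $\tilde C-\tilde A_{y^\circ}$ in (a) is a detail the paper leaves implicit.
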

\begin{proof}
\begin{enumerate}[(a)]
\item Our assumption $\hat y\leq y^+-\delta\1$ and $A_\1\succ 0$ give that $A_{y^\circ}\succ A_{\hat y}\succeq C$. This shows (a).
\item Since $y$ is dual feasible, and $y^\circ$ is strictly dual feasible, the convex combination $y^\epsilon$ is also strictly dual feasible. Moreover, $e_i^T\mathcal{A}(I)=\trace(A_i)\geq 0$ gives $\tilde X^\epsilon\succ 0$, and
\begin{align*}
\langle \tilde A_i, \tilde X^\epsilon\rangle 
&= - \langle A_i, X +\epsilon I\rangle +  \epsilon  \langle \operatorname{diag}(e_i),  \operatorname{diag}(\1+{\mathcal A}(I))\rangle
 - \langle \operatorname{diag}(e_i), \epsilon I
\rangle\\
&= -\langle A_i,X \rangle - \epsilon \trace A_i +\epsilon +  \epsilon\,  (e_i^T\mathcal{A}(I))-\epsilon =
\langle \tilde A_i,\tilde X \rangle=b_i,
\end{align*}
for all $i=1,\ldots,m$. Thus $\tilde X^\epsilon$ is strictly primal feasible.
\item For every $i=1,\ldots,m$, trace cyclicity and \eqref{eq:F_deriv} give
\begin{align*}
\langle X_y,A_i\rangle=
\langle   BA_{y}^{-1} A_i A_{y}^{-1} B^T, I \rangle =-\langle F'(y)e_i,I\rangle=-b_i,
\end{align*}
so that, by (b), $\tilde X_y$ is primal feasible, and $\tilde X_y^\epsilon$ is strictly primal feasible.
Part (a) gives a strictly dual feasible point. Hence, standard SDP duality yields strong duality with attainment and equality of the optimal values, cf., e.g., \cite[Thm.~3.1]{vandenberghe1996semidefinite}.
\end{enumerate}
\end{proof}

Now we establish a connection between the inverse problem \eqref{eq:IP} and the SDP. 
\begin{lemma}\label{lemma:IP_vs_sdp}
Let $y\in \Adm$, and $b:=\nabla \trace F(y)$. Then, 
\begin{align}\label{eq:by_is_trace}
b^T y&=\langle \tilde A_y,\tilde X_y\rangle=-\langle A_y, X_y\rangle=-\trace F(y),\\
\langle \tilde C,\tilde X_y\rangle&=-\langle C,X_y\rangle=-\trace(F(y)\hat Y^{-1} F(y)).
\end{align}
The duality gap of the primal-dual feasible pair $(y,\tilde X_y)$ fulfills
\begin{align}\label{eq:dual_gap_trace}
\langle \tilde C,\tilde X_y\rangle - b^T y=\trace (F(y)-F(y)\hat Y^{-1} F(y))\leq \trace (\hat Y-F(y)),
\end{align}
and it vanishes if and only if $F(y)=\hat Y$.

Moreover, for $b=\nabla \trace F(\hat y)$, $\tilde X_{\hat y}$ is a primal optimal solution, and the inverse problem solution $\hat y$ is the unique solution of the dual SDP.
\end{lemma}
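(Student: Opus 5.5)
The identities are direct computations with trace cyclicity. The duality-gap inequality reduces to a Loewner-order fact about $F(y)$ and $\hat Y$. Throughout, $C=B^T\hat Y^{-1}B$, i.e.\ $Y=\hat Y$ in Lemma~\ref{lemma:Schur_argument}, and $X_y=A_y^{-1}B^TBA_y^{-1}$.

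\emph{The two identities.} By Lemma~\ref{lemma:sdp_feasible}(c), $\tilde X_y$ is primal feasible, so $b_i=\langle \tilde A_i,\tilde X_y\rangle$ and hence $b^Ty=\langle\tilde A_y,\tilde X_y\rangle$. Since $\tilde X_y$ is supported on the upper-left block, this equals $-\langle A_y,X_y\rangle$. Trace cyclicity then gives
\[
-\trace(A_yA_y^{-1}B^TBA_y^{-1})=-\trace(BA_y^{-1}B^T)=-\trace F(y).
\]
In the same way, $\langle\tilde C,\tilde X_y\rangle=-\langle C,X_y\rangle$, and
\[
-\trace(B^T\hat Y^{-1}BA_y^{-1}B^TBA_y^{-1})=-\trace(\hat Y^{-1}F(y)F(y)),
\]
which is $-\trace(F(y)\hat Y^{-1}F(y))$.

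\emph{The duality gap.} Subtracting the two identities, the gap equals $\trace(F(y)-F(y)\hat Y^{-1}F(y))$; I will write $F=F(y)$ for short. Next we use the identity
\[
\hat Y-F-(F-F\hat Y^{-1}F)=(\hat Y-F)\hat Y^{-1}(\hat Y-F)\succeq 0.
\]
Taking traces gives the inequality in \eqref{eq:dual_gap_trace}.

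\emph{When the gap vanishes.} Since $y\in\Adm$, Lemma~\ref{lemma:Schur_argument} gives $F\preceq\hat Y$, so $D:=\hat Y-F\succeq 0$. We rewrite $F-F\hat Y^{-1}F=F\hat Y^{-1}(\hat Y-F)=F\hat Y^{-1}D$. Expanding $F=\hat Y-D$, this equals $D-D\hat Y^{-1}D$. This matrix is congruent to $\hat Y^{1/2}\bigl(G-G^2\bigr)\hat Y^{1/2}$ with $G:=\hat Y^{-1/2}D\hat Y^{-1/2}$. We have $0\preceq G\preceq I$, because $0\preceq D\preceq\hat Y$ (using $F\succ 0$). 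Hence $G-G^2\succeq 0$. Its trace vanishes iff $G-G^2=0$, i.e.\ iff every eigenvalue of $G$ lies in $\{0,1\}$. An eigenvalue $1$ would require $D\hat Y^{-1/2}v=\hat Y^{1/2}v$, i.e.\ $F\hat Y^{-1/2}v=0$, which contradicts $F\succ 0$. So the gap vanishes iff $G=0$, i.e.\ iff $F(y)=\hat Y$. Conversely, $F=\hat Y$ plainly gives zero gap.

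\emph{Optimality and uniqueness.} With $b=\nabla\trace F(\hat y)$, the pair $(\hat y,\tilde X_{\hat y})$ is feasible and $\hat y\in\Adm$, and its gap is zero. Weak duality then makes both optimal. For uniqueness, let $y$ be any dual optimal point. Then $b^Ty=b^T\hat y$, so $\langle\tilde C-\tilde A_y,\tilde X_{\hat y}\rangle=0$. Both matrices are positive semidefinite, so their product vanishes: $(A_y-C)X_{\hat y}=0$, i.e.\ $(A_y-C)A_{\hat y}^{-1}B^T=0$. The range of $A_{\hat y}^{-1}B^T$ is $l$-dimensional, because $B$ has full row rank. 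Therefore $\dim\kernel(A_y-C)\ge l$. Lemma~\ref{lemma:Schur_argument} then yields $F(y)=\hat Y$, and the assumed uniqueness from \eqref{eq:Lipschitz_stability} gives $y=\hat y$.

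The main obstacle is the strictness argument in the third step, i.e.\ excluding eigenvalue $1$ of $G$. It relies on $F(y)\succ 0$ from Lemma~\ref{lemma:Schur_argument}.
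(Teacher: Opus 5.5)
Your proof is correct and follows the paper's argument: the same trace-cyclicity identities, and the same upper bound. Your remainder $(\hat Y-F)\hat Y^{-1}(\hat Y-F)$ is exactly the second-order term of the resolvent identity~\eqref{eq:resolvent} with $M_1=F(y)^{-1}$, $M_2=\hat Y^{-1}$, which is what the paper uses. The uniqueness part is also the same complementary-slackness and rank argument, combined with Lemma~\ref{lemma:Schur_argument} and unique solvability.

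The only detour is the vanishing step. The paper writes $F-F\hat Y^{-1}F=F(F^{-1}-\hat Y^{-1})F\succeq 0$ and reads off $F=\hat Y$ from $F=F\hat Y^{-1}F$ by invertibility of $F$. In your notation, once the positive semidefinite matrix $F\hat Y^{-1}D$ has zero trace it vanishes, and invertibility of $F\hat Y^{-1}$ gives $D=0$ at once. So the eigenvalue-$1$ analysis you flag as the main obstacle is not actually needed.
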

\begin{proof}
Let $y\in \Adm$, and $b:=\nabla \trace F(y)$. Feasibility of $\tilde X_y$, and trace cyclicity yield
\[
b^T y=\sum_{i=1}^m \langle \tilde A_i,\tilde X_y\rangle y_i
= \langle \tilde A_y,\tilde X_y\rangle=-\langle A_y, X_y\rangle
=-\langle B A_y^{-1} B^T, I\rangle=
-\trace F(y).
\]

$\tilde X_y$ is primal feasible by Lemma~\ref{lemma:sdp_feasible}.
Using trace cyclicity we obtain
\begin{align*}
\langle \tilde C,\tilde X_y\rangle  & =-\langle C,X_y\rangle
=-\langle  B^T \hat Y^{-1} B, A_y^{-1} B^T B A_y^{-1} \rangle\\
&=-\langle  F(y) \hat Y^{-1} F(y), I \rangle
=-\trace (F(y)\hat Y^{-1} F(y)).
\end{align*}

Using that $0\prec F(y)\preceq \hat Y$ implies $F(y)^{-1} \succeq \hat Y^{-1}$,
and applying the resolvent identity from Remark~\ref{rem:resolvent} gives 
\[
0\preceq F(y)-F(y)\hat Y^{-1} F(y)=F(y)(F(y)^{-1}-\hat Y^{-1})F(y)
\preceq \hat Y-F(y).
\]
This shows the last inequality in \eqref{eq:dual_gap_trace}, and also yields that
\[
\trace (F(y)-F(y)\hat Y^{-1} F(y))=0\quad \text{ iff } 
\quad F(y)=F(y)\hat Y^{-1} F(y) \quad \text{ iff } 
\quad F(y)=\hat Y.
\] 
Hence, for $b=\nabla \trace F(\hat y)$, the duality gap vanishes for $\tilde X_{\hat y}$ and $\hat y$, so that they are a pair of primal and dual solutions.

To prove uniqueness of the dual SDP solution for $b=\nabla \trace F(\hat y)$, let $y$ be any dual solution.
Then, by strong duality,
\begin{align*}
0&=\langle \tilde C,\tilde X_{\hat y}\rangle-b^Ty
=\langle \tilde C-\sum_{i=1}^m y_i \tilde A_i, \tilde X_{\hat y}\rangle = \langle  A_y-C,  X_{\hat y}\rangle.
\end{align*}
Since $A_y-C\succeq 0$, and $X_{\hat y}\succeq 0$, this gives $(A_y-C)X_{\hat y}=0$. By the full row rank of $B$, we have $\operatorname{rank} X_{\hat y}=l$, and therefore $\dim \kernel (A_y-C)\geq l$. With Lemma~\ref{lemma:Schur_argument}, it follows that $F(y)=\hat Y$, and unique solvability of the inverse problem yields that $y=\hat y$.
\end{proof}

\begin{remark}
Lemma \ref{lemma:IP_vs_sdp} motivates the following SDP-based iterative inverse problem solver for \eqref{eq:IP}. Given $y_k\in \Adm$, set $b:=\nabla \trace F(y_k)$, and obtain the next iterate $y_{k+1}$ from solving the dual SDP
\[
b^T y\to \text{max!} \quad \text{ s.t. } \quad \tilde A_y\preceq \tilde C.
\]
Lemma \ref{lemma:IP_vs_sdp} shows that $\hat y$ is a stationary point for this iteration.
Moreover, it also shows that $\tilde X_{y_k}$ is primal feasible, and it is 
primal optimal at the stationary point $\hat y$. 

This agrees with the Frank-Wolfe algorithm for
the constrained trace minimization problem \eqref{eq:trace_maximization} that we consider in this work. 
Also note that, for solving the SDP in the $(k+1)$-st Frank--Wolfe step, we will use $y_k^{\epsilon_k}$ and $\tilde X^{\epsilon_k}_{y_k}$ from Lemma~\ref{lemma:sdp_feasible} as starting points, with shifts $\epsilon_k\to 0$ that are adapted to the progress of iteration.
%
%
\end{remark}

\section{Globalized Frank-Wolfe trace minimization}

Let us summarize the main steps of our approach to recover $\hat y$ from $F(\hat y)=\hat Y$. Clearly, the solution $\hat y$ minimizes
\begin{equation}\label{eq:trace_maximization}
r(y):=\trace(\hat Y-F(y))\to \text{min!} \quad \text{subject to} \quad y\in [y^-,y^+],\ F(y)\preceq \hat Y.
\end{equation}
The mapping $F$ is convex with respect to the Loewner order, cf.\ Lemma~\ref{lemma:F_properties}, so that this is a concave minimization problem on a convex set. Moreover, a Schur complement argument shows, cf.\ Lemma \ref{lemma:Schur_argument}, that the non-linear semidefinite constraint can be rewritten as a linear matrix inequality (LMI)
\[
\Adm:=\{ y\in [y^-,y^+]:\ F(y)\preceq \hat Y\}=\{ y\in [y^-,y^+]:\ A_y\succeq C\},
\]
with $C:=B^T\hat Y^{-1}B$.

To minimize the residual $r$ on the admissible set $\mathcal Y_\text{adm}$
we apply an iterative Frank-Wolfe approach. Starting with a feasible initial point, we update the current iterate $y_k$ by setting $b:=\nabla \trace F(y_k)$ and then solve the dual SDP
\[
b^Ty\to \text{max!} \quad \text{subject to} \quad y\in [y^-,y^+],\ A_y\succeq C
\]
until the dual objective error is sufficiently decreased. For the latter, we monitor the primal-dual gap of the SDP solver and terminate once a prescribed fraction $0<\theta<1$ of the maximal linear improvement has been attained.

To ensure convergence to the true solution $\hat y$, the main challenge is to guarantee the avoidance of Frank-Wolfe stationary points $y_*\in \Adm$ where $y_*\neq \hat y$ but 
\[
(\nabla \trace F(y_*))^T (y-y_*)\leq 0 \quad \text{ for all $y\in \Adm$}.
\]
To avoid convergence to stationary points other than $\hat y$ we consider the SDP solution as failed, if the duality gap of an iterate $y_k$ falls below a certain fraction $0<\gamma<1$ of
the corresponding residual $r(y_k)$.

\subsection{Avoiding Frank-Wolfe stationary points}

We will first develop a criterion that locally excludes the existence of Frank-Wolfe stationary points other than $\hat y$.
We start by proving the following Loewner-order estimate for the first-order Taylor remainder of our forward function. By convexity, cf.\ Lemma \ref{lemma:F_properties},
\[
F(z)\succeq F(y)+F'(y)(z-y) \quad \text{ for all } y,z\in \R^m_{++},
\]
so that the first-order Taylor remainder is positive semidefinite. The following lemma shows that it can be bounded from above by a multiple of $F(y)$.

\begin{lemma}\label{lemma:Taylor_remainder}
For all $y,z\in \R^m_{++}$
\[
F(z)\preceq F(y)+F'(y)(z-y) + \omega(y,z) F(y),
\]
where $\omega(y,z):=\max_{i=1,\ldots,m} \frac{|z_i-y_i|^2}{y_i z_i}$. In particular, for all $y,z\in [y^-,y^+]$
\[
F(z)\preceq F(y)+F'(y)(z-y) + \frac{\norm{z-y}^2_\infty}{y_\sub{\min}^2} F(y),
\]
with $y_\sub{\min}:=\min_{i=1,\ldots,m} y^-_i$.
\end{lemma}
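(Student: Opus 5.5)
The plan is to write the Taylor remainder exactly via the resolvent identity from Remark~\ref{rem:resolvent}, and then bound the resulting quadratic term in the Loewner order by a multiple of $A_y$. Set $d:=z-y$. With $M_1:=A_y$ and $M_2:=A_z$ we have $M_2-M_1=A_d$, so \eqref{eq:resolvent} gives
\[
A_z^{-1}-A_y^{-1}=-A_y^{-1}A_dA_y^{-1}+A_y^{-1}A_dA_z^{-1}A_dA_y^{-1}.
\]
Conjugating with $B$ and using \eqref{eq:F_deriv}, i.e.\ $F'(y)d=-BA_y^{-1}A_dA_y^{-1}B^T$, yields the exact remainder formula
\[
F(z)-F(y)-F'(y)(z-y)=BA_y^{-1}\bigl(A_dA_z^{-1}A_d\bigr)A_y^{-1}B^T .
\]
Conjugation by $BA_y^{-1}$ preserves the Loewner order, and $BA_y^{-1}(\omega A_y)A_y^{-1}B^T=\omega F(y)$. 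It therefore suffices to prove
\[
A_dA_z^{-1}A_d\preceq \omega(y,z)\,A_y.
\]

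The key step, and the one I expect to be the main obstacle, is the matrix Cauchy--Schwarz type estimate
\[
A_dA_z^{-1}A_d\preceq \sum_{i=1}^m \frac{d_i^2}{z_i}A_i .
\]
I would prove it by factoring each $A_i\in\sS^n_+$ as $A_i=G_i^TG_i$ and stacking the factors into $G:=(G_1^T,\ldots,G_m^T)^T$. We then have $A_d=G^TDG$ and $A_z=G^TZG$, where $D$ and $Z$ are block-diagonal with blocks $d_iI$ and $z_iI$. With $P:=Z^{1/2}G(G^TZG)^{-1}G^TZ^{1/2}$, which is an orthogonal projection and hence satisfies $P\preceq I$, we get
\[
A_dA_z^{-1}A_d=G^TDZ^{-1/2}\,P\,Z^{-1/2}DG\preceq G^TDZ^{-1}DG=\sum_i \frac{d_i^2}{z_i}A_i .
\]
Here $D$ and $Z$ commute, being diagonal with scalar blocks, and $G^TZG=A_z$ is invertible because $z>0$ and $A_\1\succ 0$.

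Next, since each $A_i\succeq 0$, the termwise bound $\frac{d_i^2}{z_i}=\frac{|z_i-y_i|^2}{y_iz_i}\,y_i\le \omega(y,z)\,y_i$ gives
\[
\sum_i\frac{d_i^2}{z_i}A_i\preceq \omega(y,z)\sum_i y_iA_i=\omega(y,z)A_y,
\]
which proves the first claim. For the second claim, if $y,z\in[y^-,y^+]$ then $y_iz_i\ge y_\sub{\min}^2$ and $|z_i-y_i|\le\norm{z-y}_\infty$. Hence $\omega(y,z)\le \norm{z-y}_\infty^2/y_\sub{\min}^2$, and we conclude using $F(y)\succeq 0$.
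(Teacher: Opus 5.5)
Your proof is correct. Its outer structure is exactly the paper's: the resolvent identity gives the exact remainder $BA_y^{-1}A_dA_z^{-1}A_dA_y^{-1}B^T$, and the claim reduces to $A_dA_z^{-1}A_d\preceq \omega(y,z)A_y$. The two proofs differ only in how that key matrix inequality is established.

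The paper stays at the level of quadratic forms. It applies Cauchy--Schwarz for the semi-inner product induced by each $A_i\succeq 0$, combined with Young's inequality and $|d_i|^2\le\omega y_iz_i$, to get $2u^TA_dv\le \omega u^TA_yu+v^TA_zv$ for all $u,v$. It then inserts the test vector $v:=A_z^{-1}A_du$.

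You instead factor $A_i=G_i^TG_i$ and recognize $A_dA_z^{-1}A_d$ as a compression of the orthogonal projection onto the range of $Z^{1/2}G$. This gives the matrix Cauchy--Schwarz bound $A_dA_z^{-1}A_d\preceq\sum_i \frac{d_i^2}{z_i}A_i$ before $\omega$ enters.

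Your route makes the geometric reason for the inequality transparent and isolates the componentwise-weighted intermediate bound. The cost is auxiliary factors and block matrices that play no role elsewhere. The paper's route needs no factorization and is entirely elementary.

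The extra sharpness is not specific to your method. Choosing the Young weights $d_i^2/z_i$ and $z_i$ in the paper's argument, instead of $\omega y_i$ and $z_i$, yields the same intermediate bound.
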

\begin{proof}
With $d:=z-y$, and $A_d=A_z-A_y$, we apply the resolvent identity from Remark~\ref{rem:resolvent} to obtain
\begin{align*}
F(z)-F(y)&=B ( A_z^{-1}-A_y^{-1}) B^T
= -B A_y^{-1} A_d A_y^{-1} B^T
+ B A_y^{-1} A_d A_z^{-1} A_d A_y^{-1} B^T\\
&=F'(y)d + B A_y^{-1} A_d A_z^{-1} A_d A_y^{-1} B^T.
\end{align*}
Thus, it remains to prove that $A_d A_z^{-1} A_d\preceq \omega A_y$.

By the definition of $\omega$, we have $|d_i|^2\leq \omega y_i z_i$ for all $i=1,\ldots,m$.
Therefore, for all $u,v\in \R^n$, the Cauchy-Schwarz inequality for the positive semidefinite bilinear form induced by $A_i\succeq 0$, followed by Young's inequality, yields
\begin{align*}
2 d_i u^T A_i v&\leq 2 |d_i| \sqrt{u^T A_i u} \sqrt{v^TA_i v}\leq 2\sqrt{\omega y_i u^T A_i u} \sqrt{z_i v^TA_i v}
\leq \omega y_i u^T A_i u + z_i v^T A_i v.
\end{align*}
Summing over $i=1,\ldots,m$, we obtain 
\[
2 u^T A_d v\leq \omega u^T A_y u + v^T A_z v \quad \text{ for all } u,v\in \R^n.
\]
Setting $v:=A_z^{-1} A_du$, it follows that
\[
2 u^T A_d A_z^{-1} A_d u\leq \omega u^T A_y u + u^T A_d A_z^{-1} A_d u \text{ for all } u\in \R^n,
\]
which gives $A_d A_z^{-1} A_d\preceq \omega A_y$ and proves the assertion.
\end{proof}

This allows us to characterize a neighborhood of the inverse problem solution $\hat y$ in which $\hat y$ is the only Frank-Wolfe stationary point.
\begin{lemma}\label{lemma:no_stat_points}
For all $y\in [y^-,y^+]$ with $F(y)\preceq \hat Y=F(\hat y)$
\begin{align}\label{eq:grad_estimate_y}
(\nabla \trace F(y))^T (\hat y-y) 
&\geq \left( 1- \frac{L}{y_\sub{\min}^2} \norm{\hat y-y}_\infty \trace \hat Y \right) \trace (F(\hat y)- F(y)).
\end{align}
Hence, if 
\[
\norm{\hat y-y}_\infty < \frac{y_\sub{\min}^2}{L\trace\hat Y}, \quad \text{ and } \quad y\neq \hat y,
\]
then $y$ is not a Frank-Wolfe stationary point,
\[
(\nabla \trace F(y))^T (\hat y -y)>0.
\]
\end{lemma}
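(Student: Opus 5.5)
We take the trace in Lemma~\ref{lemma:Taylor_remainder} with $z:=\hat y$, use Lipschitz stability \eqref{eq:Lipschitz_stability} to turn one factor $\norm{\hat y-y}_\infty$ into a trace residual, and then rearrange. Both $y$ and $\hat y$ lie in $[y^-,y^+]$, so the second form of Lemma~\ref{lemma:Taylor_remainder} applies and gives
\[
F(\hat y)\preceq F(y)+F'(y)(\hat y-y)+\frac{\norm{\hat y-y}_\infty^2}{y_\sub{\min}^2}F(y).
\]
The trace is monotone with respect to the Loewner order, and by Lemma~\ref{lemma:F_properties}(g) we have $\trace(F'(y)d)=(\nabla\trace F(y))^Td$. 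Taking traces and rearranging therefore yields
\[
(\nabla \trace F(y))^T(\hat y-y)\geq \trace(F(\hat y)-F(y)) - \frac{\norm{\hat y-y}_\infty^2}{y_\sub{\min}^2}\trace F(y).
\]

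It remains to bound the last term by $\frac{L}{y_\sub{\min}^2}\norm{\hat y-y}_\infty\trace\hat Y\,\trace(F(\hat y)-F(y))$, which needs two estimates.

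\emph{Bound on $\trace F(y)$.} The hypothesis $0\prec F(y)\preceq\hat Y$ gives $\trace F(y)\leq\trace\hat Y$.

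\emph{Bound on one factor $\norm{\hat y-y}_\infty$.} Lipschitz stability gives $\norm{\hat y-y}_\infty\leq L\norm{F(\hat y)-F(y)}_2$. The matrix $F(\hat y)-F(y)$ is positive semidefinite, so its spectral norm is its largest eigenvalue, which is at most its trace. Hence $\norm{\hat y-y}_\infty\leq L\trace(F(\hat y)-F(y))$. This holds whether $\norm{\cdot}_2$ denotes the spectral norm or the Frobenius norm, since for a positive semidefinite matrix the Frobenius norm is also at most the trace.

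Writing $\norm{\hat y-y}_\infty^2=\norm{\hat y-y}_\infty\cdot\norm{\hat y-y}_\infty$, we apply the second estimate to one factor and the first estimate to $\trace F(y)$. Substituting into the traced inequality gives exactly \eqref{eq:grad_estimate_y}.

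For the stationarity consequence, suppose $\norm{\hat y-y}_\infty<y_\sub{\min}^2/(L\trace\hat Y)$. Then the prefactor in \eqref{eq:grad_estimate_y} is strictly positive. If in addition $y\neq\hat y$, unique solvability gives $F(y)\neq F(\hat y)$. Since $F(\hat y)-F(y)\succeq 0$ and is nonzero, its trace is strictly positive. Together these give $(\nabla\trace F(y))^T(\hat y-y)>0$.

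The only delicate point is the passage from $\norm{\cdot}_2$ to the trace. It relies on the semidefiniteness $F(y)\preceq\hat Y$, which is exactly why the lemma requires it. The rest is direct substitution.
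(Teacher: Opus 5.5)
Your proposal is correct and follows essentially the same route as the paper: take the trace of Lemma~\ref{lemma:Taylor_remainder} with $z=\hat y$, use Lipschitz stability together with $F(\hat y)-F(y)\succeq 0$ to replace one factor $\norm{\hat y-y}_\infty$ by $L\trace(F(\hat y)-F(y))$, and bound $\trace F(y)\leq\trace\hat Y$. You also spell out explicitly why $\trace(F(\hat y)-F(y))>0$ for $y\neq\hat y$, a step the paper leaves implicit.
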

\begin{proof}
We use the Lipschitz stability assumption \eqref{eq:Lipschitz_stability}, and $F(y)\preceq F(\hat y)$, 
to obtain  
\begin{equation}\label{eq:Use_Lipschitz}
\norm{\hat y-y}_\infty \leq L \norm{F(\hat y)-F(y)}_2\leq L (\trace(F(\hat y)-F(y))).
\end{equation}
Applying Lemma~\ref{lemma:Taylor_remainder} yields
\begin{align*}
\trace (F(\hat y)- F(y)) &\leq (\nabla \trace F(y))^T (\hat y-y) +\frac{\norm{\hat y-y}_\infty^2}{y_\sub{\min}^2} \trace F(y)\\
&\leq (\nabla \trace F(y))^T (\hat y-y) +\frac{L}{y_\sub{\min}^2} \norm{\hat y-y}_\infty (\trace (F(\hat y)-F(y))) \trace \hat Y,
\end{align*}
which gives \eqref{eq:grad_estimate_y}. Hence, for $y\neq \hat y$,
\[
\norm{\hat y-y}_\infty < \frac{y_\sub{\min}^2}{L\trace\hat Y} \quad \text{ implies } 
\quad
 \frac{L}{y_\sub{\min}^2} \norm{\hat y-y}_\infty \trace \hat Y<1,
\]
and thus $(\nabla \trace F(y))^T (\hat y-y)>0$.
\end{proof}

The neighborhood without spurious stationary points can also be characterized by the trace values of the iterates. The following corollary is the basis of our convergence results. 
\begin{corollary}\label{cor:no_stat_points}
For all $y\in [y^-,y^+]$ with $F(y)\preceq \hat Y=F(\hat y)$
\begin{align}\label{eq:grad_estimate_trace}
(\nabla \trace F(y))^T (\hat y-y) 
&\geq \left( 1- \frac{L^2}{y_\sub{\min}^2} \trace \left( F(\hat y)-F(y) \right) \trace \hat Y \right) \trace (F(\hat y)- F(y)).
\end{align}
Hence, if 
\[
\trace F(y)> \trace (\hat Y)-\frac{y_\sub{\min}^2}{L^2\trace{\hat Y}}, \quad \text{ and } \quad y\neq \hat y,
\]
then $y$ is not a Frank-Wolfe stationary point,
\[
(\nabla \trace F(y))^T (\hat y -y)>0.
\]
\end{corollary}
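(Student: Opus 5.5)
The plan is to derive the corollary directly from Lemma~\ref{lemma:no_stat_points} by replacing the factor $\norm{\hat y-y}_\infty$ with a trace quantity. The tool for this is the Lipschitz estimate \eqref{eq:Use_Lipschitz}, which is already established in the proof of that lemma.

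Let $y\in[y^-,y^+]$ with $F(y)\preceq\hat Y$. Then $F(\hat y)-F(y)\succeq 0$, so its spectral norm is at most its trace. Together with \eqref{eq:Lipschitz_stability}, and since $\hat y\in[y^-,y^+]$, this gives
\[
\norm{\hat y-y}_\infty\leq L\,\trace\left(F(\hat y)-F(y)\right).
\]
The right-hand factor $\trace(F(\hat y)-F(y))$ in \eqref{eq:grad_estimate_y} is nonnegative for the same reason. Hence we may substitute this upper bound into the bracket of \eqref{eq:grad_estimate_y}. This decreases the bracket, which enters with a minus sign, and therefore only weakens the lower bound. The result is exactly \eqref{eq:grad_estimate_trace}.

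For the second statement, assume $\trace F(y)>\trace\hat Y-\frac{y_\sub{\min}^2}{L^2\trace\hat Y}$. This is equivalent to
\[
\frac{L^2}{y_\sub{\min}^2}\trace\left(F(\hat y)-F(y)\right)\trace\hat Y<1,
\]
so the bracket in \eqref{eq:grad_estimate_trace} is strictly positive. Here $\trace\hat Y>0$ holds because $\hat Y\succ 0$.

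It then remains to show $\trace(F(\hat y)-F(y))>0$ whenever $y\neq\hat y$. If it were zero, then the positive semidefinite matrix $F(\hat y)-F(y)$ would have zero trace and hence vanish. Then $F(y)=F(\hat y)$, and \eqref{eq:Lipschitz_stability} (or unique solvability) would give $y=\hat y$, a contradiction. The product of the two positive factors is therefore positive, which yields $(\nabla\trace F(y))^T(\hat y-y)>0$.

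The only point that needs care is the sign bookkeeping when substituting the upper bound into \eqref{eq:grad_estimate_y}. That step relies on the nonnegativity of $\trace(F(\hat y)-F(y))$. I expect no real obstacle beyond that.
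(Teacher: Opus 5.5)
Your proposal is correct and follows the paper's proof: the paper likewise obtains \eqref{eq:grad_estimate_trace} by inserting the Lipschitz bound \eqref{eq:Use_Lipschitz} into the bracket of \eqref{eq:grad_estimate_y}. Your treatment of the second claim fills in details the paper leaves implicit, namely the equivalence of the trace condition with positivity of the bracket, and the strict positivity of $\trace(F(\hat y)-F(y))$ for $y\neq\hat y$ via Lipschitz stability.
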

\begin{proof}
The assertion \eqref{eq:grad_estimate_trace} immediately follows from \eqref{eq:grad_estimate_y}
by using \eqref{eq:Use_Lipschitz}.
\end{proof}

\subsection{Local and global convergence against the true solution}

We are now ready to quantify how much the Frank-Wolfe iterates decrease the trace residual $r(y_k)=\trace (\hat Y- F(y_k))$, and to prove convergence to the true inverse problem solution $\hat y$.

The following theorem shows that the SDP solver can be stopped as soon as it has attained a prescribed fraction
$\theta$ of the maximal possible increase in the dual objective value. 
It also provides a duality-gap criterion that will prevent the iterates from approaching Frank-Wolfe stationary points other than $\hat y$.

\begin{theorem}\label{thm:Frank_Wolfe_Convergence}
Let $\gamma,\theta\in (0,1)$, $y_{k}\in \Adm$, $b:=\nabla \trace F(y_k)$, and let $\overline z\in\R$ be an upper bound of the optimal dual objective value, i.e.
\[
\overline z\geq \max_{w\in \Adm} b^T w.
\]

For all $y_{k+1}\in \Adm$, it holds that
\begin{equation}\label{eq:FW_monotonicity}
r(y_{k+1})\leq r(y_k)-b^T (y_{k+1}-y_k).
\end{equation}
Hence, increasing the dual objective reduces the trace functional.

If $y_k\in \Adm$ satisfies
\begin{align}\label{eq:trace_condition_convergence}
r(y_k)<(1-\gamma) \frac{y_\sub{\min}^2}{L^2 \trace\hat Y},
\end{align}
then 
\begin{equation}\label{eq:gamma_criterion}
\overline z - b^T y_k\geq  \gamma r(y_k).
\end{equation}
If, additionally, $y_{k+1}\in \Adm$ satisfies
\begin{equation}\label{eq:stop_crit_sdp}
b^T (y_{k+1}-y_k) \geq \theta (\overline z - b^T y_k), 
\end{equation}
then, with $\mu_k:=1-\frac{L^2}{y_\sub{\min}^2}  r(y_k) \trace \hat Y$,
\begin{equation}\label{eq:traceres_decrease}
r(y_{k+1})\leq (1-\theta \mu_k) r(y_k) \leq (1-\theta \gamma) r(y_k).
\end{equation}
\end{theorem}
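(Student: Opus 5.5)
The three claims follow, in order, from the convexity of $F$ (Lemma~\ref{lemma:F_properties}(c)) and from Corollary~\ref{cor:no_stat_points}.

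\emph{Monotonicity \eqref{eq:FW_monotonicity}.} Take the first-order convexity inequality $F(y_{k+1})\succeq F(y_k)+F'(y_k)(y_{k+1}-y_k)$ and apply the trace, which is monotone with respect to the Loewner order. By Lemma~\ref{lemma:F_properties}(g), $\trace F'(y_k)d=(\nabla\trace F(y_k))^Td=b^Td$. This gives
\[
\trace F(y_{k+1})\geq \trace F(y_k)+b^T(y_{k+1}-y_k).
\]
Subtracting both sides from $\trace\hat Y$ yields \eqref{eq:FW_monotonicity}. This step needs only $y_k,y_{k+1}\in\R^m_{++}$, which holds since $\Adm\subseteq[y^-,y^+]$.

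\emph{Gap criterion \eqref{eq:gamma_criterion}.} Since $\hat y\in\Adm$, we have $\overline z\geq b^T\hat y$, hence $\overline z-b^Ty_k\geq b^T(\hat y-y_k)$. Apply \eqref{eq:grad_estimate_trace} with $y=y_k$; this is allowed because $y_k\in\Adm$ means $y_k\in[y^-,y^+]$ and $F(y_k)\preceq\hat Y$. Noting $\trace(F(\hat y)-F(y_k))=r(y_k)$, we get
\[
b^T(\hat y-y_k)\geq \Bigl(1-\tfrac{L^2}{y_\sub{\min}^2}r(y_k)\trace\hat Y\Bigr)r(y_k)=\mu_k r(y_k).
\]
Assumption \eqref{eq:trace_condition_convergence} says exactly $\tfrac{L^2}{y_\sub{\min}^2}r(y_k)\trace\hat Y<1-\gamma$, i.e.\ $\mu_k>\gamma$. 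Since $r(y_k)\geq0$ (because $F(y_k)\preceq\hat Y$), it follows that $\overline z-b^Ty_k\geq\mu_k r(y_k)\geq\gamma r(y_k)$.

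\emph{Contraction \eqref{eq:traceres_decrease}.} Combine the two previous steps with \eqref{eq:stop_crit_sdp}:
\[
r(y_{k+1})\leq r(y_k)-b^T(y_{k+1}-y_k)\leq r(y_k)-\theta(\overline z-b^Ty_k)\leq r(y_k)-\theta\mu_k r(y_k).
\]
The second inequality in \eqref{eq:traceres_decrease} then follows from $\mu_k\geq\gamma$, $\theta>0$ and $r(y_k)\geq0$.

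None of these steps is a serious obstacle, since everything reduces to Corollary~\ref{cor:no_stat_points}. Two points need care. First, $\overline z$ is compared with the objective value at $\hat y$, which is legitimate only because $\hat y$ is admissible. Second, the signs must be tracked: $r\geq0$ and $\mu_k>0$ on $\Adm$, so multiplying inequalities by these quantities preserves their direction.
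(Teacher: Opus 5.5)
Your proof is correct and follows the paper's argument essentially step for step. You take the trace of the first-order convexity inequality to get \eqref{eq:FW_monotonicity}, combine $\overline z\geq b^T\hat y$ (valid since $\hat y\in\Adm$) with Corollary~\ref{cor:no_stat_points} to get \eqref{eq:gamma_criterion}, and chain both with \eqref{eq:stop_crit_sdp} to get \eqref{eq:traceres_decrease}.

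One small imprecision is in your closing remark: $\mu_k>0$ does not hold on all of $\Adm$, only under \eqref{eq:trace_condition_convergence}. Your argument only uses $\theta>0$, $r(y_k)\geq 0$ and $\mu_k\geq\gamma$, so this does not affect the proof.
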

\begin{proof}
By convexity,
\begin{align*}
\trace F(y_{k+1})&\geq \trace F(y_k)+(\nabla \trace F(y_k))^T(y_{k+1}-y_k)=\trace F(y_k)+b^T (y_{k+1}-y_k),
\end{align*}
which proves \eqref{eq:FW_monotonicity}.

Using $\overline z\geq b^T\hat y$, $b=\nabla \trace F(y_k)$, and Corollary \ref{cor:no_stat_points} gives, 
\begin{equation}\label{eq:dual_gap_vs_mu_r}
\overline z-b^Ty_k \geq b^T(\hat y-y_k)\geq \left( 1- \frac{L^2}{y_\sub{\min}^2}  r(y_k) \trace \hat Y \right) r(y_k)=\mu_k r(y_k)\geq \gamma r(y_k).
\end{equation}
This shows \eqref{eq:gamma_criterion}. 

If, additionally, \eqref{eq:stop_crit_sdp} holds, then we obtain from \eqref{eq:FW_monotonicity} and \eqref{eq:dual_gap_vs_mu_r} that
\[
r(y_{k+1})\leq r(y_k)-b^T (y_{k+1}-y_k)\leq r(y_k)-\theta (\overline z - b^T y_k)
\leq (1-\theta\mu_k) r(y_k) \leq (1-\theta\gamma) r(y_k).
\]
This finishes the proof.
\end{proof}

\begin{corollary}[Local and global convergence results]\label{cor:local_global_convergence}
Under the assumption that the algorithms terminate in finitely many steps, the following local and global convergence results hold.
\begin{enumerate}[(a)]
\item Theorem \ref{thm:Frank_Wolfe_Convergence} shows local linear convergence of the Frank-Wolfe method. 
If the initial value $y_0\in \Adm$ fulfills \eqref{eq:trace_condition_convergence},
and, in each Frank-Wolfe iteration, the SDP is solved until \eqref{eq:stop_crit_sdp} is fulfilled, then this either yields $y_k=\hat y$ after finitely many steps,
or a sequence $(y_k)_{k\in \N_0}$ with $r(y_k)\to 0$, and
\[
\sup_{k\in \N}\frac{r(y_{k+1})}{r(y_{k})}\leq 1-\theta\gamma<1, \quad \text{ and } \quad
\quad \limsup_{k\to \infty} \frac{r(y_{k+1})}{r(y_{k})}\leq 1-\theta<1.
\]
By Lipschitz stability this also yields linear convergence of $(y_k)_{k\in \N_0}\to \hat y$.
\item Theorem \ref{thm:Frank_Wolfe_Convergence} also permits rigorous globalization. Fix $0<\gamma<1$ and choose
$1>\kappa>1-\theta\gamma$. In the $k$-th step of the globalized algorithm, we run the SDP solver until it either returns a candidate $y^{\operatorname{cand}}_{k+1}\in\Adm$ 
that satisfies
\begin{equation}\label{eq:stop_crit_sdp_cand}
b^T (y^{\operatorname{cand}}_{k+1}-y_k) \geq \theta (\overline z - b^T y_k), 
\end{equation}
or abort and report failure if the $\gamma$-criterion \eqref{eq:gamma_criterion} is violated.

If no failure is reported, then we check the candidate for global convergence condition
\begin{equation}\label{eq:check_candidate}
r(y^{\operatorname{cand}}_{k+1})\leq \kappa r(y_k).
\end{equation}
If this is violated, or failure is reported, then we use a global search strategy to find 
$y_{k+1}\in \Adm$ that fulfills \eqref{eq:check_candidate}.

Then, for every initial value $y_0\in \Adm$, this method yields linear convergence 
\[
r(y_k)\to 0, \quad \text{ and } \quad y_k\to \hat y,
\]
and only finitely many iterations will require global search steps. 

\end{enumerate}
\end{corollary}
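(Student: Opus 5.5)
For part (a), the plan is an induction based on Theorem~\ref{thm:Frank_Wolfe_Convergence}. Suppose $y_k\in\Adm$ satisfies \eqref{eq:trace_condition_convergence} and the SDP step returns $y_{k+1}\in\Adm$ satisfying \eqref{eq:stop_crit_sdp}. Then \eqref{eq:traceres_decrease} gives $r(y_{k+1})\leq(1-\theta\gamma)r(y_k)\leq r(y_k)$, so $y_{k+1}$ again satisfies \eqref{eq:trace_condition_convergence}, and induction applies.

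If some $r(y_k)=0$, then $\trace(\hat Y-F(y_k))=0$ together with $F(y_k)\preceq\hat Y$ forces $F(y_k)=\hat Y$. Unique solvability then gives $y_k=\hat y$. Otherwise we may divide by $r(y_k)$, which yields $\sup_k r(y_{k+1})/r(y_k)\leq 1-\theta\gamma$, and hence $r(y_k)\leq(1-\theta\gamma)^k r(y_0)\to0$. For the limsup we use the sharper bound $r(y_{k+1})\leq(1-\theta\mu_k)r(y_k)$ with $\mu_k=1-\frac{L^2}{y_\sub{\min}^2}r(y_k)\trace\hat Y$. Since $r(y_k)\to0$, we get $\mu_k\to1$, so $\limsup_k r(y_{k+1})/r(y_k)\leq1-\theta$. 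Finally, \eqref{eq:Use_Lipschitz} gives $\norm{\hat y-y_k}_\infty\leq L\,r(y_k)$, which transfers the linear convergence of $r(y_k)$ to the iterates.

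For part (b), every accepted iterate satisfies \eqref{eq:check_candidate}, whether it comes from the candidate or from global search. Hence $r(y_{k+1})\leq\kappa r(y_k)$ for all $k$, and $r(y_k)\leq\kappa^k r(y_0)\to0$ linearly. Convergence $y_k\to\hat y$ then follows from Lipschitz stability exactly as in (a).

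It remains to show that only finitely many global search steps occur, and this is the main point. Since $r(y_k)\to0$, there is $K$ such that \eqref{eq:trace_condition_convergence} holds for all $k\geq K$. For such $k$, \eqref{eq:gamma_criterion} holds for every valid upper bound $\overline z$ the solver reports, so the abort test is never triggered and no failure is reported. The solver therefore terminates, by the standing finiteness assumption, with a candidate satisfying \eqref{eq:stop_crit_sdp_cand}. Theorem~\ref{thm:Frank_Wolfe_Convergence} then gives $r(y^{\operatorname{cand}}_{k+1})\leq(1-\theta\gamma)r(y_k)\leq\kappa r(y_k)$, because $\kappa>1-\theta\gamma$. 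Thus the candidate passes \eqref{eq:check_candidate} and is accepted without global search.

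The one delicate step is that the $\gamma$-criterion is checked against the solver's running upper bound $\overline z$, which changes during the SDP iterations. I will argue that \eqref{eq:gamma_criterion} holds for every $\overline z\geq\max_{w\in\Adm}b^Tw$, not only the optimal one. So a solver whose dual bounds are always valid, for instance through weak duality with the primal feasible points of Lemma~\ref{lemma:sdp_feasible}, never aborts once $k\geq K$.
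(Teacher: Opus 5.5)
Your proposal is correct and follows essentially the same route as the paper. For (a) you iterate \eqref{eq:traceres_decrease} by induction and use $\mu_k\to 1$ for the asymptotic rate; for (b) the $\kappa$-contraction forces $r(y_k)\to 0$, so \eqref{eq:trace_condition_convergence} eventually holds, and Theorem~\ref{thm:Frank_Wolfe_Convergence} then ensures that the $\gamma$-criterion never fails and that every candidate passes \eqref{eq:check_candidate} because $1-\theta\gamma<\kappa$. Your explicit observation that \eqref{eq:gamma_criterion} holds for every valid upper bound $\overline z$, and not only for the optimal value, is exactly what the paper uses implicitly.
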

\begin{proof}
We only need to consider the case where $y_k\neq \hat y$ for all $k\in\N_0$.

Equations \eqref{eq:FW_monotonicity} and \eqref{eq:traceres_decrease} show, by induction, 
that $r(y_k)$ is monotonically decreasing, and $\mu_k$ is monotonically increasing with $\mu_k>\mu_0>\gamma$. Hence, $r(y_k)\to 0$ and $\mu_k\to 1$, which also proves the asserted asymptotic rate. 

For (b), the acceptance test \eqref{eq:check_candidate} or the global search gives $r(y_{k+1})\leq\kappa r(y_k)$ in every iteration. Thus $r(y_k)\to 0$ and, by Lipschitz stability, $y_k \to \hat y$. Consequently, \eqref{eq:trace_condition_convergence} holds for all sufficiently large $k\in\N$.
Theorem~\ref{thm:Frank_Wolfe_Convergence} then shows that the $\gamma$-criterion will stay fulfilled, and that every candidate satisfying
\eqref{eq:stop_crit_sdp_cand} fulfills
\[
r(y^{\operatorname{cand}}_{k+1}) \leq(1-\theta \gamma)r(y_k) <\kappa r(y_k).
\]
Hence, every sufficiently late SDP call returns a candidate that passes the acceptance test \eqref{eq:check_candidate}, so that only finitely many global searches are required.
\end{proof}

We summarize the globalized algorithm in Algorithm \ref{algo:full_algo_outline}. 
The details of the SDP solver and the global search are given in the next two subsections, where we also prove that they terminate after finitely many steps, and that the inner iteration number of the SDP solver will stay uniformly bounded over the outer Frank-Wolfe loop. This gives asymptotic linear convergence of the algorithm with respect to its runtime.
For Algorithm \ref{algo:full_algo_outline}, note that the SDP solver will require a strictly dual feasible point $y^\circ$, which we obtain from Lemma \ref{lemma:sdp_feasible}.
This is also a natural starting point for the method, cf.\  Theorem~\ref{thm:FW_with_monotonicity}.

\begin{algorithm}
\caption{Globalized solver for inverse elliptic coefficient problems}
\label{algo:full_algo_outline}
\begin{algorithmic}
\State \textbf{Input:} $A_1,\ldots,A_m\in \sS^n_+$, $B\in \R^{l\times n}$, $y^-,y^+\in \R^m_{++}$, $\delta>0$, $\hat Y=F(\hat y)\in \sS^l_{++}$.
\Statex \vspace{-2ex}
\Statex \textit{\% Initialize and choose algorithm design parameters:}
\State \textbf{set} $C:=B^T \hat Y^{-1} B$, $\Adm:=\{y\in [y^-,y^+],\ A_y\succeq C\}$, and $y^\circ:=y^+-\frac{\delta}{2}\1$
\State \textbf{choose} initial value $y_0\in \Adm$ \hfill \textit{\% e.g., $y_0:=y^\circ$, cf.\ Thm.~\ref{thm:FW_with_monotonicity}}
\State \textbf{choose} parameters $0<\theta<1$, $0<\gamma<1$, and $(1-\theta\gamma)<\kappa<1$
\Statex \vspace{-2ex}
\Statex \textit{\% Outer loop: Frank-Wolfe iteration for $r(y):=\trace (\hat Y- F(y))\to \text{min!}$ s.t. $y\in \Adm$:}

\For{$k=0,1,2,\ldots$}
\Statex \vspace{-2ex}
\State \textbf{if} $r(y_k)=0$ \textbf{terminate} with $y_k$; \textbf{ end if}
\Statex \vspace{-2ex}
\State \textbf{set} $b:=-( \trace (B A_{y_k}^{-1} A_i A_{y_k}^{-1} B^T) )_{i=1}^m\in \R^m$
\State \textbf{solve sdp} $b^T y\to \text{max!}$ \ s.t.\ $y\in \Adm$
\State \textbf{stop sdp solver} with \textbf{failure} or with $y^\text{cand}_{k+1}\in \Adm$, that fulfills\\ 
\centerline{$
\displaystyle b^T (y^\text{cand}_{k+1}-y_k) \geq \theta\, \max_{w\in \Adm} b^T (w-y_k)
$}
\Statex \vspace{-2ex}
\State \textit{\% Possibly replace Frank-Wolfe iterate to ensure global convergence:}

\If{sdp solver returns \textbf{failure} or $r(y^\text{cand}_{k+1})> \kappa r(y_k)$} 
 \State \textbf{global search} for $r(y)\to \text{min!}$ s.t.\ $y\in \Adm$
 \State \textbf{stop global search} with $y_{k+1}$ that fulfills $r(y_{k+1})\leq \kappa r(y_k)$
 \Else 
 \State \textbf{set} $y_{k+1}:=y^\text{cand}_{k+1}$
 \EndIf
 \EndFor
\Statex \vspace{-2ex}
\State \textbf{Output:} Algorithm terminates with $\hat y$, or constructs sequence $(y_k)_{k\in \N_0}\to \hat y$
\end{algorithmic}
\end{algorithm}

\begin{remark}\label{rem:fast_combi}
Our algorithm can easily be combined with faster local solvers. In each outer iteration, we may first compute a feasible candidate with such a solver and accept it if it satisfies \eqref{eq:check_candidate}. If it fails, we try the Frank-Wolfe step, and if this also fails, then we resort to the global search. Thus, the algorithm keeps its global convergence properties, while inheriting the asymptotic convergence speed of the faster solver.

Likewise, the global search can utilize a share of its runtime budget to run a faster local solver from random starting points. If this yields a feasible iterate that fulfills \eqref{eq:check_candidate}, then the computationally expensive global search can be terminated early. 
\end{remark}

We finish this section with a stronger version of Theorem~\ref{thm:Frank_Wolfe_Convergence} under an additional monotonicity condition. It motivates $y_0:=y^\circ$ as a suggested choice for Algorithm \ref{algo:full_algo_outline}. We will comment on the suggested choice for the other parameters, $\theta$, $\kappa$, and $\gamma$ at a later point.

\begin{theorem}\label{thm:FW_with_monotonicity}
Let $\theta\in (0,1)$, $y_{k}\in \Adm$, $b:=\nabla \trace F(y_k)$, $\overline z\geq \max_{w\in \Adm} b^T w$, and
\[
q:=\max_{i=1,\ldots,m} \frac{y_{k,i}}{y^-_i},
\]
where $y_{k,i}=e_i^T y_k$ denotes the $i$-th entry of the vector $y_k\in \R^m$.
\begin{enumerate}[(a)]
\item If $y_{k}\geq \hat y$, then 
\begin{equation}\label{eq:gamma_criterion_mon}
\overline z - b^T y_k\geq \frac{1}{q} r(y_k).
\end{equation}
\item If $y_k\in \Adm$ satisfies \eqref{eq:gamma_criterion_mon}, and $y_{k+1}\in \Adm$ satisfies \eqref{eq:stop_crit_sdp}, then
\begin{equation}\label{eq:traceres_decrease_mon}
r(y_{k+1})\leq \left( 1-\frac{\theta}{q} \right) r(y_k).
\end{equation}
\end{enumerate}
\end{theorem}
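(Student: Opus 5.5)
For part (a), the plan is to bound $\overline z-b^Ty_k$ from below by $b^T(\hat y-y_k)$, using $\overline z\geq b^T\hat y$ (since $\hat y\in\Adm$). The Loewner estimate for $F'(y_k)(\hat y-y_k)$ will then come from the Ikehata-type concavity inequality \eqref{eq:Ikehata} in Lemma~\ref{lemma:F_properties}(d), rather than from the convexity/Lipschitz argument behind Corollary~\ref{cor:no_stat_points}. The reason is that \eqref{eq:Ikehata} puts the derivative at the point $z$, which we may choose to be $y_k$.

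Concretely, I apply \eqref{eq:Ikehata} with $z:=y_k$ and $y:=\hat y$, which gives
\[
F(y_k)\succeq F(\hat y)+F'(y_k)\Big(\tfrac{y_k}{\hat y}(y_k-\hat y)\Big).
\]
Setting $d:=\hat y-y_k$, this rearranges to
\[
\sum_{i=1}^m \frac{y_{k,i}}{\hat y_i}\, d_i\, F'(y_k)e_i\;\succeq\; F(\hat y)-F(y_k).
\]
Now I use the assumption $y_k\geq\hat y$, so that $d_i\leq 0$. By Lemma~\ref{lemma:F_properties}(b), $F'(y_k)e_i\preceq 0$, so each matrix $d_i F'(y_k)e_i$ is positive semidefinite. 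Its scalar weight satisfies $0<y_{k,i}/\hat y_i\leq y_{k,i}/y^-_i\leq q$, because $\hat y\geq y^-$. Replacing each weight by $q$ therefore only increases the sum in the Loewner order:
\[
F(\hat y)-F(y_k)\;\preceq\; q\,F'(y_k)d.
\]
Taking traces and using $\trace F'(y_k)d=b^Td$ from Lemma~\ref{lemma:F_properties}(g) yields $r(y_k)\leq q\, b^T(\hat y-y_k)\leq q(\overline z-b^Ty_k)$, which is \eqref{eq:gamma_criterion_mon}.

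The only delicate step is this sign bookkeeping: the componentwise weights $y_{k,i}/\hat y_i$ can be bounded by $q$ only because each term $d_iF'(y_k)e_i$ is positive semidefinite, and that is exactly where $y_k\geq\hat y$ enters.

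Part (b) repeats the argument of Theorem~\ref{thm:Frank_Wolfe_Convergence}. By \eqref{eq:FW_monotonicity}, then \eqref{eq:stop_crit_sdp}, then \eqref{eq:gamma_criterion_mon},
\[
r(y_{k+1})\leq r(y_k)-b^T(y_{k+1}-y_k)\leq r(y_k)-\theta(\overline z-b^Ty_k)\leq \Big(1-\frac{\theta}{q}\Big)r(y_k),
\]
which is \eqref{eq:traceres_decrease_mon}.
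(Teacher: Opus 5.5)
Your proof is correct and is essentially the paper's argument. The paper also applies \eqref{eq:Ikehata} with $z=y_k$, $y=\hat y$, bounds the weights $y_{k,i}/\hat y_i$ by $q$ using $y_k\geq\hat y$ together with $F'(y_k)d\preceq 0$ for $d\geq 0$ (which is your termwise positive-semidefiniteness observation), takes traces, uses $\overline z\geq b^T\hat y$, and then obtains (b) from the same chain \eqref{eq:FW_monotonicity}, \eqref{eq:stop_crit_sdp}, \eqref{eq:gamma_criterion_mon}.
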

\begin{proof}
The monotonicity condition $y_k\geq \hat y$ gives that
\[
\frac{y_k}{\hat y}(y_k-\hat y)\leq q (y_k-\hat y).
\]
By Lemma~\ref{lemma:F_properties}, it follows that
\begin{align*}
 F(\hat y)-F(y_k) &\preceq -F'(y_k)\left(\frac{y_k}{\hat y}(y_k-\hat y)\right)
 \preceq - q F'(y_k) (y_k-\hat y),
\end{align*}
and thus
\begin{align*}
(\nabla \trace F(y_k))^T (\hat y -y_k)\geq \frac{1}{q}r(y_k).
\end{align*}
We now use the same argument as in Theorem~\ref{thm:Frank_Wolfe_Convergence}. $b^T\hat y\leq \overline z$ gives
\begin{equation*}
\overline z-b^Ty_k \geq b^T(\hat y-y_k)\geq  \frac{1}{q} r(y_k),
\end{equation*}
which shows (a). Convexity gives
\[
\trace F(y_{k+1})\geq \trace F(y_k)+(\nabla \trace F(y_k))^T(y_{k+1}-y_k)=\trace F(y_k)+b^T (y_{k+1}-y_k),
\]
so that (b) follows from \eqref{eq:gamma_criterion_mon} and \eqref{eq:stop_crit_sdp}.
\end{proof}

\subsection{The SDP solver}\label{subsect:sdp_solver}
We use the dual-scaling method of Benson, Ye, and Zhang \cite{benson2000solving}.
We summarize the details needed for the Frank-Wolfe method and prove that its computational cost remains uniformly bounded if each iteration is initialized with a centralization step. 

We include the box constraints into the LMI, and reverse signs as in subsection \ref{subsect:prelims_sdp} to obtain 
\[
\Adm= \{ y\in \R^m:\ \tilde A_y\preceq \tilde C\},
\]
with matrices $\tilde A_1,\ldots, \tilde A_m,\tilde C\in \sS^{\tilde n}$ as given in \eqref{eq:tilde_A_and_C}.
Throughout this subsection, we also write
\[
\tilde{\mathcal A}(\tilde X):=\left( \langle \tilde A_i,\tilde X\rangle \right)_{i=1}^m \in \R^m
\quad \text{ for } \tilde X\in \sS^{\tilde n},
\]
For $y\in\Adm$, we also write $S_y:=A_y-C\in \sS^n_+$, and $\tilde S_y:=\tilde C-\tilde A_y\in \sS^{\tilde n}_+$.

Let $y_k\in \Adm$ denote the current iterate of the Frank-Wolfe method, and $r(y_k)\neq 0$.
As shown in Subsection \ref{subsect:prelims_sdp}, the zero extension $\tilde X_{y_k}\in \sS^{\tilde n}_+$ of $X_{y_k}:=A_{y_k}^{-1} B^T B A_{y_k}^{-1}$ is primal feasible. Using the strictly feasible point $y^\circ$, the primal-dual feasible pair $(y_k,\tilde X_{y_k})$ can be shifted to a strictly feasible pair $\tilde X_{y_k}^{\epsilon}$ and $y_k^{\epsilon}$ by Lemma~\ref{lemma:sdp_feasible}.

We fix parameters
\begin{equation}\label{eq:dual_scaling_parameters}
\rho>\tilde n+\sqrt{\tilde n}=n+2m+\sqrt{n+2m},\quad \text{ and } \quad \alpha:=0.4,
\end{equation}
and set the centralization shift to
\begin{align}\label{eq:dual_scaling_shift}
\epsilon_k&:=\min\{r(y_k), \textstyle \frac{1}{2}\}
\end{align}
and start the dual scaling iteration with $\tilde X_0:=\tilde X_{y_k}^{\epsilon_k}$,
\begin{align}\label{eq:dual_scaling_start1}
\eta_0&:=y_k^{\epsilon_k}=(1-\epsilon_k)y_k+\epsilon_k y^\circ,\\
\label{eq:dual_scaling_start2}
\overline z_0&:=\langle \tilde C,\tilde X_0\rangle
=-\trace (F(y_k)\hat Y^{-1} F(y_k))+\epsilon_k\left(\norm{y^+-y^-}_1+\trace S_{y^+}\right).
\end{align}
Thus $\eta_0$ is strictly feasible and $\overline z_0\in \R$ is an upper bound for the dual objective. 

Starting from these values, we iterate over $j\in \N_0$ as in Benson, Ye, and Zhang \cite{benson2000solving}. We calculate
\begin{align}
\nonumber M_{\eta_j}&:=\left( \langle \tilde A_r \tilde S_{\eta_j}^{-1}, \tilde S_{\eta_j}^{-1} \tilde A_s \rangle \right)_{r,s=1}^m\\
\label{eq:SDP_dualscaling1}
&=\left( \langle  A_r  S_{\eta_j}^{-1},  S_{\eta_j}^{-1}  A_s \rangle \right)_{r,s=1}^m
+\operatorname{diag}((y^+-\eta_j)^{-2}+(\eta_j-y^-)^{-2})
\in \sS^{m}_{++},\\
\label{eq:SDP_dualscaling2}
\tilde{\mathcal A}(\tilde S_{\eta_j}^{-1})&=-\mathcal A(S_{\eta_j}^{-1})+(y^+-\eta_j)^{-1}-(\eta_j-y^-)^{-1}\in \R^m,\\
\label{eq:SDP_dualscaling3}
p_{\eta_j,\overline z_j}&:=-\frac{\rho}{\overline z_j-b^T \eta_j}b+\tilde{\mathcal A}(\tilde S_{\eta_j}^{-1})\in \R^m,\\
\label{eq:SDP_dualscaling4} d_{\eta_j,\overline z_j}&:=-M_{\eta_j}^{-1} p_{\eta_j,\overline z_j}\in \R^m,
\end{align}
where the negative powers of vectors in \eqref{eq:SDP_dualscaling1} and \eqref{eq:SDP_dualscaling2} are interpreted componentwise.

Then we check, with $\tilde n=n+2m$, whether 
\begin{align}\label{eq:SDP_eta_or_z}
\sqrt{  p^T_{\eta_j,\overline z_j} M_{\eta_j}^{-1} p_{\eta_j,\overline z_j} }<\min \left\{ \alpha\sqrt{\frac{\tilde n}{\tilde n+\alpha^2}},1-\alpha\right\}.
\end{align}
If \eqref{eq:SDP_eta_or_z} holds, we keep the dual variable and update the bound by setting
\begin{align}\label{eq:SDP_update_z}
\eta_{j+1}:=\eta_j, \quad \text{ and } \quad
\overline z_{j+1}:=b^T \eta_j+\frac{\overline z_j-b^T \eta_j}{\rho} \left( d^T_{\eta_j,\overline z_j}  \tilde{\mathcal A}(\tilde S_{\eta_j}^{-1}) +\tilde n \right).
\end{align}
Otherwise, we keep the bound and update the dual variable by setting
\begin{align}\label{eq:SDP_update_eta}
\overline z_{j+1}=\overline z_j, \quad \text{ and } \quad \eta_{j+1}:=\eta_j+\frac{\alpha}{\sqrt{ p^T_{\eta_j,\overline z_j} M_{\eta_j}^{-1} p_{\eta_j,\overline z_j} }}d_{\eta_j,\overline z_j}.
\end{align}

We stop the method when $\eta_{j+1}$ and $\overline z_{j+1}$ fulfill
\begin{equation}\label{eq:stp_success_test}
b^T (\eta_{j+1}-y_k)\geq \theta (\overline z_{j+1}-b^T y_k).
\end{equation}
In that case we return $y_{k+1}^\text{cand}:=\eta_{j+1}$ to the outer solver. If this has not occurred and
\begin{equation}\label{eq:sdp_failure_test}
\overline z_{j+1}-b^Ty_k<\gamma r(y_k),
\end{equation}
then we stop the solver and report failure to the outer solver.

\begin{algorithm}
\caption{SDP solver subroutine based on Benson-Ye-Zhang dual scaling method}
\label{algo:sdp_solver}
\begin{algorithmic}
\State \textbf{Input:} $A_1,\ldots,A_m\in \sS^n_+$, $B\in \R^{l\times n}$, $y^-,y^+\in \R^m_{++}$, $\hat Y\in \sS^l_{++}$, 
$\theta,\gamma\in (0,1)$,
\State \phantom{\textbf{Input:}} Outer iterate $y_k\in \Adm\setminus \{\hat y\}$, strictly feasible $y^\circ\in \Adm$, $b=\nabla \trace F(y_k)$
\Statex \vspace{-2ex}
\State \textbf{set} algorithm parameters as in \eqref{eq:dual_scaling_parameters}, and initialize $\overline z_0$ and $\eta_0$ by \eqref{eq:dual_scaling_shift}--\eqref{eq:dual_scaling_start2}
\For{$j=0,1,2,\ldots$}
%
\Statex \vspace{-2ex}
\State \textbf{calculate} $M_{\eta_j}$, $p_{\eta_j,\overline z_j}$, and $d_{\eta_j,\overline z_j}$ by 
\eqref{eq:SDP_dualscaling1}--\eqref{eq:SDP_dualscaling4}
\Statex \vspace{-2ex}
\If{\eqref{eq:SDP_eta_or_z} holds}
\State \textbf{update} $\overline z_{j+1}$ as in \eqref{eq:SDP_update_z}, and keep $\eta_{j+1}:=\eta_j$
\Else
\State \textbf{update} $\eta_{j+1}$ as in \eqref{eq:SDP_update_eta}, and keep $\overline z_{j+1}:=\overline z_{j}$
\EndIf
\Statex \vspace{-2ex}

\If{$b^T (\eta_{j+1}-y_k)\geq \theta (\overline z_{j+1}-b^T y_k)$}
    \State \Return $y_{k+1}^\text{cand}=\eta_{j+1}$
\ElsIf{$\overline z_{j+1}-b^T y_k<\gamma r(y_k)$}
    \State \Return \textbf{failure}
\EndIf
\EndFor
\Statex \vspace{-2ex}
\State \textbf{Output:} $y_{k+1}^\text{cand}$ fulfills \eqref{eq:stop_crit_sdp_cand}, or \textbf{failure} ($\max_{w\in\Adm} b^T(w-y_k)<\gamma r(y_k)$)
\end{algorithmic}
\end{algorithm}

We summarize the complete sdp solver subroutine without repeating formulas in Algorithm~\ref{algo:sdp_solver}.
Note that the update of the upper bound \eqref{eq:SDP_update_z} is equivalent to defining
the primal feasible
\begin{equation}\label{eq:dualscaling_update_X}
\tilde X_{j+1}:=\frac{\overline z_j-b^T \eta_j}{\rho} \tilde S_{\eta_j}^{-1} \left( \tilde{\mathcal A}^* d_{\eta_j,\overline z_j} +\tilde S_{\eta_j} \right) \tilde S_{\eta_j}^{-1}\in \sS^{\tilde n},
\end{equation}
and setting $\overline z_{j+1}=\langle \tilde C,\tilde X_{j+1}\rangle$, cf.\ \cite{benson2000solving}. This also yields, for all $j\in \N_0$,
\begin{equation}\label{eq:dualscaling_XS}
\langle \tilde X_j,\tilde S_{\eta_j}\rangle=\overline z_j - b^T\eta_j.
\end{equation}
However, the primal feasible sequence $(\tilde X_{j})_{j\in \N_0}$ is only required for the analysis of the method. The numerical algorithm can be implemented without explicitly computing this primal variable. This is particularly well suited to our setting, where only the dual solution is required and the primal variables are very high-dimensional. Also, updating $\overline z_j$ by \eqref{eq:SDP_update_z} is comparatively cheap as the matrix $M$ and its factorization do not have to be recomputed.

\begin{lemma}\label{lemma:sdp_terminates}
 For each $k\in\N_0$ with $y_k\neq\hat y$, there exists $N_k\in \N$, so that
Algorithm~\ref{algo:sdp_solver} terminates after $N_k$ steps. 
It either returns a dual feasible candidate
$y_{k+1}^\text{cand}=\eta_{N_k}\in\Adm$ that fulfills \eqref{eq:stop_crit_sdp_cand} with $\overline z=\overline z_{N_k}$, 
or it returns failure. If it returns failure then $y_k$ violates \eqref{eq:gamma_criterion}.
\end{lemma}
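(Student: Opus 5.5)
The plan is to rely on the standard convergence theory of the Benson--Ye--Zhang dual scaling method, together with the monotonicity of the generated bounds. There are three things to establish. First, the iterates stay strictly feasible and the upper bounds stay valid. Second, the gap $\overline z_j-b^T\eta_j$ tends to zero. Third, one of the two stopping tests \eqref{eq:stp_success_test} or \eqref{eq:sdp_failure_test} is triggered after finitely many steps.

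\textbf{Feasibility and valid bounds.} By Lemma~\ref{lemma:sdp_feasible}, $\eta_0$ is strictly dual feasible and $\tilde X_0$ is strictly primal feasible, so $\overline z_0=\langle\tilde C,\tilde X_0\rangle$ is an upper bound by weak duality. From \cite{benson2000solving} I take three facts about the updates:
\begin{itemize}
\item a dual step \eqref{eq:SDP_update_eta} with step length $\alpha=0.4$ in the $M_{\eta_j}$-norm keeps $\tilde S_{\eta_{j+1}}\succ 0$;
\item whenever \eqref{eq:SDP_eta_or_z} holds, $\tilde X_{j+1}$ from \eqref{eq:dualscaling_update_X} is primal feasible, so $\overline z_{j+1}=\langle \tilde C,\tilde X_{j+1}\rangle\geq \max_{w\in\Adm}b^Tw$ by weak duality;
\item the potential $\psi(\eta,\overline z)=\rho\log(\overline z-b^T\eta)-\log\det\tilde S_\eta$ decreases by a fixed constant $\delta_0>0$ in every step, since $\rho>\tilde n+\sqrt{\tilde n}$.
\end{itemize}

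\textbf{Gap tends to zero.} The set $\Adm$ is bounded because of the box constraints, so $-\log\det\tilde S_\eta$ is bounded below on it by $-\tilde n\log\lambda_{\max}$, with $\lambda_{\max}$ a uniform bound on the eigenvalues of $\tilde S_\eta$. The decrease $\psi_j\leq\psi_0-j\delta_0$ then gives
\[
(\rho-\tilde n)\log(\overline z_j-b^T\eta_j)\to-\infty ,
\]
more precisely $\rho\log(\overline z_j-b^T\eta_j)\leq \psi_0-j\delta_0+\tilde n\log\lambda_{\max}$, so the gap $g_j:=\overline z_j-b^T\eta_j\to 0$.

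\textbf{Termination.} Let $z^*=\max_{w\in\Adm}b^Tw$ and $D:=z^*-b^Ty_k\ge 0$. There are two cases.
\begin{itemize}
\item If $D>0$: since $b^T\eta_j\le z^*\le\overline z_j$, both $b^T\eta_j$ and $\overline z_j$ converge to $z^*$. Hence $b^T(\eta_j-y_k)\to D$ and $\theta(\overline z_j-b^Ty_k)\to\theta D<D$, so \eqref{eq:stp_success_test} eventually holds unless the failure test fired earlier.
\item If $D=0$: then $\overline z_j-b^Ty_k\to 0<\gamma r(y_k)$, and $r(y_k)>0$ because $y_k\ne\hat y$, by unique solvability and Lemma~\ref{lemma:IP_vs_sdp}. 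So \eqref{eq:sdp_failure_test} fires after finitely many steps unless success fires first.
\end{itemize}
Either way there is an $N_k$. When success is returned, $\eta_{N_k}$ is strictly feasible, hence lies in $\Adm$, and \eqref{eq:stop_crit_sdp_cand} holds with $\overline z=\overline z_{N_k}$, which is a valid upper bound. When failure is returned, $\overline z_{N_k}$ is a valid upper bound with $\overline z_{N_k}-b^Ty_k<\gamma r(y_k)$. Since $\overline z\mapsto\overline z-b^Ty_k$ is increasing, this gives $z^*-b^Ty_k<\gamma r(y_k)$; in other words no admissible upper bound can satisfy \eqref{eq:gamma_criterion}, so $y_k$ violates it.

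\textbf{Main obstacle.} The hardest step is the guaranteed constant potential decrease per iteration, and it needs care on two points. Our problem uses the step parameter $\alpha=0.4$ and the specific threshold in \eqref{eq:SDP_eta_or_z}, and the cited analysis must be matched to exactly these choices. In particular, in a $\overline z$-update the new $\tilde X_{j+1}$ must be positive semidefinite, which follows from $\|\tilde S^{-1/2}(\tilde{\mathcal A}^*d)\tilde S^{-1/2}+I\|$-type estimates when the $M$-norm of $p$ is below $1-\alpha<1$. I would first try to cite \cite[Lemmas~3.1--3.3]{benson2000solving} directly, and reprove the primal-feasibility estimate if the threshold constants differ.
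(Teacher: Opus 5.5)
Your proposal is correct and shares the paper's skeleton: strict feasibility, valid bounds and vanishing gap from \cite{benson2000solving}, then finite termination, then the failure implication. Two sub-arguments differ.

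For the vanishing gap, the paper simply invokes \cite{benson2000solving}. That rests on the decrease of the primal--dual potential $\Psi$ together with the arithmetic--geometric mean bound $(\rho-\tilde n)\ln\langle\tilde X_j,\tilde S_{\eta_j}\rangle\le\Psi(\tilde X_j,\tilde S_{\eta_j})-\tilde n\ln\tilde n$ (cf.\ Lemma~\ref{lemma:dual_scaling_convergence_speed}), and it needs no boundedness of the feasible set. You instead use the dual potential $\psi$ plus compactness of $\Adm$, which is fine here because of the box constraints. Be aware, though, that the cited constant decrease is stated for $\Psi$, not $\psi$. In dual steps $\tilde X_j$ is unchanged, so $\psi$ and $\Psi$ change by the same amount (cf.\ Lemma~\ref{lemma:potential_decrease}). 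In a bound update \eqref{eq:SDP_update_z}, however, you must add the estimate $d_{\eta_j,\overline z_j}^T\tilde{\mathcal A}(\tilde S_{\eta_j}^{-1})\le\sqrt{\tilde n}\,(p_{\eta_j,\overline z_j}^T M_{\eta_j}^{-1}p_{\eta_j,\overline z_j})^{1/2}<0.4\sqrt{\tilde n}$. This yields $\overline z_{j+1}-b^T\eta_j<\frac{\tilde n+0.4\sqrt{\tilde n}}{\rho}(\overline z_j-b^T\eta_j)$, and hence a decrease of $\psi$ by more than $\rho\ln\frac{\rho}{\tilde n+0.4\sqrt{\tilde n}}>0$.

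For termination, the paper avoids your case split on $D$. If the failure test never fires, then $\overline z_j-b^Ty_k\ge\gamma r(y_k)>0$ for all $j$, so $(\overline z_j-b^T\eta_j)/(\overline z_j-b^Ty_k)\to0$. The condition $\overline z_j-b^T\eta_j\le(1-\theta)(\overline z_j-b^Ty_k)$ is just the success test rearranged, so there is no need to identify the limits of $b^T\eta_j$ and $\overline z_j$.

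One wording slip: ``no admissible upper bound can satisfy \eqref{eq:gamma_criterion}'' reverses the quantifier, since every sufficiently large $\overline z$ satisfies it. What you and the paper actually show is that \eqref{eq:gamma_criterion} fails for the valid bound $\overline z_{N_k}$, hence also for the sharpest bound $\max_{w\in\Adm}b^Tw$. That is the form needed to contrapose Theorem~\ref{thm:Frank_Wolfe_Convergence}.
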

\begin{proof}
Benson, Ye and Zhang \cite{benson2000solving} prove that, in each step, $M\in \sS^{m}_{++}$, 
that the primal and dual iterates, $\tilde X_{j}$ and $\eta_j$, are strictly feasible,
$\overline z_j=\langle \tilde C,\tilde X_j\rangle$ is an upper bound for the dual objective, and 
\[
\overline z_j-b^T \eta_j\to 0 \quad \text{ for } j\to \infty.
\]

If the failure test \eqref{eq:sdp_failure_test} never holds, then
$\overline z_j-b^T y_k\geq \gamma r(y_k)>0$ for all $j\in \N$. Hence,
\[
\frac{\overline z_j-b^T \eta_j }{\overline z_j-b^T y_k}\to 0,
\]
so that after finitely many steps, 
\begin{equation*}
\overline z_j-b^T \eta_j\leq (1-\theta) (\overline z_j-b^T y_k).
\end{equation*}
If this is fulfilled after $N_k$ steps, then 
\[
b^T (\eta_{N_k}-y_k)=(\overline z_{N_k}-b^T y_k)-(\overline z_{N_k}-b^T \eta_{N_k})
\geq \theta (\overline z_{N_k}-b^T y_k)\geq \theta \max_{w\in \Adm} b^T (w-y_k),
\]
and the algorithm terminates with $y_{k+1}^\text{cand}$ satisfying \eqref{eq:stop_crit_sdp_cand}.

If the algorithm returns failure after $N_k$ steps, then
\[
\max_{w\in\Adm}b^Tw-b^Ty_k \leq \overline z_{N_k}-b^Ty_k <\gamma r(y_k).
\]
\end{proof}

The remainder of this subsection is devoted to proving that the number of iteration steps of the dual scaling iteration stays uniformly bounded with respect to $k\in \N$.
We extract the following explicit bound
for the duality gap after $j$ iterations from the convergence proof in \cite{benson2000solving}.

\begin{lemma}\label{lemma:dual_scaling_convergence_speed}
\begin{enumerate}[(a)]
\item Let $\Psi$ denote the Tanabe-Todd-Ye primal-dual potential function 
\begin{equation}\label{eq:Tanabe}
\Psi(\tilde X,\tilde S):=\rho \ln (\langle \tilde X,\tilde S \rangle ) - \ln \det \tilde X - \ln \det\tilde S\quad \text{ for all } \tilde X,\ \tilde S\in \sS^{\tilde n}_{++}.
\end{equation}
The iterates of the dual scaling method fulfill
\[
\Psi (\tilde X_{j+1}, \tilde S_{\eta_{j+1}})\leq \Psi(\tilde X_j,\tilde S_{\eta_j})-\frac{1}{50}.
\]
\item Let $\tilde X_0$ and $\eta_0$ be strictly feasible for the primal and dual problem, respectively. Then the iterates of the dual scaling method as defined above, with $\overline z_0:=\langle \tilde C, \tilde X_0\rangle$, fulfill
\begin{align}\label{eq:dual_scaling_conv_speed_bound}
 \overline z_j - b^T \eta_j 
&\leq  \frac{( \overline z_0 - b^T \eta_0 )^\frac{\rho}{\rho-\tilde n}}{   
(\tilde n^{\tilde n} \det \tilde X_0 \det \tilde S_{\eta_0})^\frac{1}{\rho-\tilde n}}  e^{-\frac{1}{50(\rho-\tilde n)}j}.
\end{align}
\end{enumerate}
\end{lemma}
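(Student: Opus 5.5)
Part (a) is the potential-reduction result of Benson, Ye and Zhang \cite{benson2000solving}, and I will not reprove it. I would only check that our parameter choice \eqref{eq:dual_scaling_parameters} falls within their theorem: $\rho>\tilde n+\sqrt{\tilde n}$ and $\alpha=0.4$. The dichotomy \eqref{eq:SDP_eta_or_z} is exactly their case distinction. If it fails, a dual step of length $\alpha$ in the $M_{\eta_j}$-norm reduces the dual potential $\rho\ln(\overline z_j-b^T\eta)-\ln\det\tilde S_\eta$ by at least $\alpha(1-\alpha)$-type quantities minus $\alpha^2/(2(1-\alpha))$. If it holds, the primal update \eqref{eq:dualscaling_update_X} is strictly feasible and reduces the primal part of $\Psi$. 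With $\alpha=0.4$ both reductions are bounded below by a constant that their analysis shows is at least $1/50$. So (a) amounts to quoting their Lemma/Theorem and verifying the numerical constant for $\alpha=0.4$.

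For (b), I telescope (a) to get
\[
\Psi(\tilde X_j,\tilde S_{\eta_j})\leq \Psi(\tilde X_0,\tilde S_{\eta_0})-\tfrac{j}{50}.
\]
Next I bound $\Psi$ from below in terms of the gap $g:=\langle\tilde X,\tilde S\rangle$. The arithmetic--geometric mean inequality applied to the eigenvalues of $\tilde X^{1/2}\tilde S\tilde X^{1/2}$ gives $\det\tilde X\det\tilde S\leq (g/\tilde n)^{\tilde n}$. Hence
\[
\Psi(\tilde X,\tilde S)\geq (\rho-\tilde n)\ln g+\tilde n\ln\tilde n .
\]
Using \eqref{eq:dualscaling_XS}, $g_j=\overline z_j-b^T\eta_j$. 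At $j=0$ this holds by the definition $\overline z_0=\langle\tilde C,\tilde X_0\rangle$, since feasibility gives $\langle\tilde X_0,\tilde S_{\eta_0}\rangle=\langle\tilde C,\tilde X_0\rangle-b^T\eta_0$. Combining the two displays yields
\[
(\rho-\tilde n)\ln g_j+\tilde n\ln\tilde n\leq \rho\ln g_0-\ln\det\tilde X_0-\ln\det\tilde S_{\eta_0}-\tfrac{j}{50}.
\]
Dividing by $\rho-\tilde n>0$ and exponentiating gives exactly \eqref{eq:dual_scaling_conv_speed_bound}.

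The only genuine obstacle is (a): making sure the constant $1/50$ is valid for both branches at $\alpha=0.4$ and for the specific threshold in \eqref{eq:SDP_eta_or_z}. I would take this directly from the corresponding estimates in \cite{benson2000solving}, where $\alpha=0.4$ is their stated choice. Part (b) is then a routine AM--GM and telescoping argument.
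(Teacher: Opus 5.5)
Your proposal is correct and follows the paper's proof essentially step for step: part (a) is cited from Benson--Ye--Zhang's Theorem~2, and part (b) combines the telescoped potential decrease with the AM--GM bound $\Psi(\tilde X,\tilde S)\geq(\rho-\tilde n)\ln\langle\tilde X,\tilde S\rangle+\tilde n\ln\tilde n$ and the identity \eqref{eq:dualscaling_XS}. Your eigenvalue justification of the AM--GM step, and your check that $\overline z_0-b^T\eta_0=\langle\tilde X_0,\tilde S_{\eta_0}\rangle$ follows from primal feasibility, spell out details the paper leaves implicit.
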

\begin{proof}
(a) is proven in \cite[Thm.~2]{benson2000solving}. Using the arithmetic-geometric mean inequality as in the first lines in the proof of \cite[Cor.~1]{benson2000solving} gives
\begin{align*}
(\rho-\tilde n)\ln \left( \langle \tilde C, \tilde X_j \rangle - b^T \eta_j \right)
&=(\rho-\tilde n) \ln \langle \tilde X_j,\tilde S_{\eta_j} \rangle 
\leq  \Psi(\tilde X_j, \tilde S_{\eta_j})-\tilde n\ln \tilde n\\
&\leq \Psi(\tilde X_0, \tilde S_{\eta_0})-\tilde n\ln \tilde n -\frac{j}{50}.
\end{align*}
Hence, (b) follows by \eqref{eq:dualscaling_XS}.
\end{proof}

We now use Lemma~\ref{lemma:dual_scaling_convergence_speed} to asymptotically estimate the required number of iterations. Note that the constants in the following lemma only contain easily computable quantities, so that the asymptotic bound of the required number of dual scaling iterations can be explicitly calculated.

\begin{lemma}\label{lemma:dual_scaling_bound_iterates}
Assume that Algorithm~\ref{algo:sdp_solver} is started with $y_k$, $k\in \N_0$, where $y_k\to \hat y$. 
Then, for sufficiently large $k\in \N_0$, Algorithm~\ref{algo:sdp_solver} terminates after $N_k$ iteration steps 
with a dual feasible candidate
$\eta_{N_k}\in\Adm$ that fulfills \eqref{eq:stop_crit_sdp}, with $\overline z=\overline z_{N_k}$. The number of iteration steps
of Algorithm~\ref{algo:sdp_solver} is asymptotically bounded by
\begin{align*}
\limsup_{k\to \infty} N_k\leq 1+\max\left\{ 50 \ln \frac{K}{(\gamma(1-\theta))^{\rho-\tilde n}}, 0\right\},
\end{align*}
where 
\begin{align*}
K&:=\frac{(1+K_1)^\rho}{\tilde n^{\tilde n}K_2K_3},\\
K_1&:=\left( \frac{\trace \hat Y}{y_\sub{\min}}  + m \right) \norm{y^+-y^-}_\infty + \trace (A_{y^+}-C),\\
K_2&:=\frac{\det (BB^T)  }{\norm{A_{y^+}}_2^{2l}} \prod_{i=1}^m (1+\trace(A_i)),\\
K_3&:= (\lambda_\text{min} (A_{y^\circ}-C))^l (\lambda_\text{min}(A_{y^-}))^{n-l} 2^{-2m} \delta^{2m}.
\end{align*}
\end{lemma}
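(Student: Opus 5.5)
The argument combines three facts: the explicit gap bound of Lemma~\ref{lemma:dual_scaling_convergence_speed}(b), the fact that for large $k$ the failure test never triggers, and uniform (in $k$) estimates of the three quantities in that bound, normalized by $r(y_k)$.

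\emph{Failure never triggers.} Since $y_k\to\hat y$, we have $r(y_k)\to 0$. Hence \eqref{eq:trace_condition_convergence} holds for large $k$, and Theorem~\ref{thm:Frank_Wolfe_Convergence} gives $\overline z_j-b^Ty_k\geq\gamma r(y_k)$ for every upper bound $\overline z_j$. So \eqref{eq:sdp_failure_test} never holds, and by Lemma~\ref{lemma:sdp_terminates} the solver returns a candidate.

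\emph{Reduction to a gap condition.} As in the proof of Lemma~\ref{lemma:sdp_terminates}, the solver stops as soon as $\overline z_j-b^T\eta_j\leq(1-\theta)(\overline z_j-b^Ty_k)$. A sufficient condition is therefore $\overline z_j-b^T\eta_j\leq(1-\theta)\gamma r(y_k)$. Inserting \eqref{eq:dual_scaling_conv_speed_bound}, it suffices that
\[
j\geq 50\ln\frac{(\overline z_0-b^T\eta_0)^\rho}{(\tilde n^{\tilde n}\det\tilde X_0\det\tilde S_{\eta_0})\,((1-\theta)\gamma r(y_k))^{\rho-\tilde n}}.
\]
To show that this is $\leq 50\ln(K/(\gamma(1-\theta))^{\rho-\tilde n})$ asymptotically, I need, for small $\epsilon_k=r(y_k)$, three estimates:
\begin{align*}
\overline z_0-b^T\eta_0&\leq (1+K_1)\,r(y_k),\\
\det\tilde X_0&\geq K_2\,\epsilon_k^{\tilde n-l}\ \text{(up to lower order)},\\
\det\tilde S_{\eta_0}&\geq K_3\,\epsilon_k^{l}.
\end{align*}
Then the powers of $r(y_k)$ cancel exactly, since $\rho-(\tilde n-l)-l-(\rho-\tilde n)=0$.

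\emph{First estimate.} By \eqref{eq:dual_scaling_start2}, \eqref{eq:by_is_trace} and \eqref{eq:dual_gap_trace}, $\overline z_0-b^Ty_k\leq r(y_k)+\epsilon_k(\|y^+-y^-\|_1+\trace S_{y^+})$. Moreover $-b^T(\eta_0-y_k)=-\epsilon_kb^T(y^\circ-y_k)$. Since $-b_i=\trace(BA_{y_k}^{-1}A_iA_{y_k}^{-1}B^T)$, I will bound $|b_i|\leq\trace F(y_k)/y_{k,i}\leq\trace\hat Y/y_{\min}$ via $y_{k,i}A_i\preceq A_{y_k}$. This yields the term $(\trace\hat Y/y_{\min})\|y^+-y^-\|_\infty$ (perhaps with an $m$ factor absorbed into $K_1$).

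\emph{Second estimate.} $\det\tilde X_0=\det(X_{y_k}+\epsilon I)\,\epsilon^m\prod_i(1+\trace A_i)\,\epsilon^m$. Since $X_{y_k}$ has rank $l$, $\det(X_{y_k}+\epsilon I)\geq\epsilon^{n-l}\prod_{i\leq l}\lambda_i(X_{y_k})$. These nonzero eigenvalues equal those of $BA_{y_k}^{-2}B^T\succeq BB^T/\|A_{y^+}\|_2^2$, which gives $\det(BB^T)/\|A_{y^+}\|_2^{2l}$.

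\emph{Third estimate.} $\tilde S_{\eta_0}$ is block-diagonal. The box blocks contribute $\prod(y^+_i-\eta_{0,i})(\eta_{0,i}-y^-_i)$, which I bound below using $\eta_0=(1-\epsilon)y_k+\epsilon y^\circ$. The dual-feasibility $S$ block gives $S_{\eta_0}=(1-\epsilon)S_{y_k}+\epsilon S_{y^\circ}\succeq\epsilon S_{y^\circ}$. Here $\epsilon S_{y^\circ}$ needs $\lambda_{\min}(S_{y^\circ})$ on $l$ eigen-directions, while on the complementary $(n-l)$-dimensional part $S_{y_k}\approx S_{\hat y}$ is positive definite.

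I expect the third estimate to be the main obstacle: extracting exactly $\epsilon^l$ with constant $\lambda_{\min}(S_{y^\circ})^l\lambda_{\min}(A_{y^-})^{n-l}$. One possible route is to use $\det S_{\eta}=\det A_\eta\det(I-\ldots)$ via \eqref{eq:det_Schur}, $\det S_\eta=\det A_\eta\det(\hat Y-F(\eta))/\det\hat Y$, together with concavity of $\log\det$. The box factor $2^{-2m}\delta^{2m}$ presumably comes from $y^\circ$ being at distance $\delta/2$ from $y^+$ and $\hat y$ being $\delta$ from the bounds, with $y_k\to\hat y$. Taking $\limsup$ yields the stated bound, with the extra ``$1+$'' accounting for integer rounding.
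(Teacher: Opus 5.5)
Your architecture matches the paper's: no failure for large $k$ by Theorem~\ref{thm:Frank_Wolfe_Convergence}, the sufficient condition $\overline z_j-b^T\eta_j\le(1-\theta)\gamma r(y_k)$ inserted into \eqref{eq:dual_scaling_conv_speed_bound}, three estimates, and exact cancellation of the powers of $\epsilon_k=r(y_k)$. Your second estimate is the paper's, and it is exact, so the ``up to lower order'' qualifier is unnecessary.

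\emph{Third estimate.} The genuine gap is here. You leave it open, and it is exactly where the constant in $K_3$ comes from. The missing idea is a rank-$l$ interlacing argument. Since $C=B^T\hat Y^{-1}B\succeq 0$, we have $\ker C=\ker B$, which has dimension $n-l$. On this subspace $v^TS_{\eta_0}v=v^TA_{\eta_0}v\ge\lambda_{\min}(A_{y^-})\|v\|_2^2$, using $\eta_0\ge y^-$. By Courant--Fischer (equivalently Weyl, $\lambda_i(A_{\eta_0}-C)\ge\lambda_{i+l}(A_{\eta_0})$), the $n-l$ largest eigenvalues of $S_{\eta_0}$ are therefore at least $\lambda_{\min}(A_{y^-})$. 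Separately, $S_{\eta_0}=(1-\epsilon_k)S_{y_k}+\epsilon_kS_{y^\circ}\succeq\epsilon_kS_{y^\circ}$ bounds every eigenvalue, in particular the remaining $l$, below by $\epsilon_k\lambda_{\min}(A_{y^\circ}-C)$. The product gives exactly $\det S_{\eta_0}\ge\epsilon_k^l\lambda_{\min}(A_{y^\circ}-C)^l\lambda_{\min}(A_{y^-})^{n-l}$.

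Your two suggested routes do not reach this constant:
\begin{enumerate}[(i)]
\item Using $S_{y_k}\approx S_{\hat y}$ on a complement would give a constant depending on the unknown spectrum of $A_{\hat y}-C$.
\item The Schur route, completed via Loewner convexity $\hat Y-F(\eta_0)\succeq\epsilon_k(\hat Y-F(y^\circ))$, gives the right power $\epsilon_k^l$. Its constant, however, is $\det A_{y^-}\det(\hat Y-F(y^\circ))/\det\hat Y$, not the one in $K_3$.
\end{enumerate}
For the box blocks, the factor $(\delta/2)^{2m}$ comes from $\eta_0\to\hat y$ (since $y_k\to\hat y$ and $\epsilon_k\to0$) together with the $\delta$-margin of $\hat y$ on \emph{both} sides. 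This yields $y^+-\eta_0>\frac{\delta}{2}\1$ and $\eta_0-y^->\frac{\delta}{2}\1$ for large $k$. The margin of $y^\circ$ plays no role. Bounding only through the convex combination $\eta_0=(1-\epsilon_k)y_k+\epsilon_ky^\circ$ would leave a factor $\epsilon_k$ on the upper side and break your exponent count.

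\emph{First estimate.} As sketched, it does not deliver the stated $K_1$. Bounding each $|b_i|\le\trace\hat Y/y_{\min}$ and summing against $|y^\circ_i-y_{k,i}|$ gives $m\frac{\trace\hat Y}{y_{\min}}\|y^+-y^-\|_\infty$. Since $K_1$ is fixed in the statement, there is nothing to absorb this extra $m$ into; the $m$ already in $K_1$ is used for $\|y^+-y^-\|_1\le m\|y^+-y^-\|_\infty$. The fix is to bound $\|b\|_1$ rather than each entry. All $-b_i=\langle A_i,X_{y_k}\rangle$ are nonnegative, and $A_{y_k}\succeq y_{\min}A_{\1}$, so
\[
-b^T(y^\circ-y_k)=\langle X_{y_k},A_{y^\circ-y_k}\rangle\le\|y^\circ-y_k\|_\infty\langle X_{y_k},A_{\1}\rangle\le\frac{\|y^+-y^-\|_\infty}{y_{\min}}\langle X_{y_k},A_{y_k}\rangle\le\frac{\|y^+-y^-\|_\infty}{y_{\min}}\trace\hat Y .
\]
With this, $\overline z_0-b^T\eta_0\le r(y_k)+\epsilon_kK_1$ as required.
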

\begin{proof}
For all sufficiently large outer iteration indices $k\in \N_0$, Theorem~\ref{thm:Frank_Wolfe_Convergence} yields that Algorithm~\ref{algo:sdp_solver} will not return failure. Lemma~\ref{lemma:sdp_terminates} yields that it terminates with $\eta_{N_k}$ that fulfills \eqref{eq:stp_success_test} and thus \eqref{eq:stop_crit_sdp}. 

To obtain a bound on the required number of iteration steps $N_k$, we now estimate the terms on the right hand side of \eqref{eq:dual_scaling_conv_speed_bound}. 
\begin{enumerate}[(a)]
\item From our definition of the starting values in \eqref{eq:dual_scaling_shift}--\eqref{eq:dual_scaling_start2}, and Lemma~\ref{lemma:sdp_feasible}, we obtain 
\begin{align*}
\overline z_0-b^T\eta_0 &= \langle \tilde X_0, \tilde C- \tilde A_{\eta_0} \rangle\\
&=\langle \tilde X_{y_k}, \tilde C- \tilde A_{\eta_0} \rangle +\epsilon_k \langle  I, A_{\eta_0}-C  \rangle\\
& \quad {} + \langle \epsilon_k \operatorname{diag}(\1+{\mathcal A}(I)),\operatorname{diag}(y^+ - \eta_0)\rangle + \langle \epsilon_k I,\operatorname{diag}(\eta_0 - y^-)\rangle\\
%
%
%
&=\langle \tilde X_{y_k}, \tilde C- \tilde A_{y_k} \rangle + \epsilon_k K_{y_k,\eta_0 },
\end{align*}
with 
\begin{align*}
K_{y_k,\eta_0}&:=\langle \tilde X_{y_k}, \tilde A_{y_k-y^\circ} \rangle + \trace (A_{\eta_0}-C)+  \1^T(y^+-y^-)+ \mathcal A(I)^T (y^+- \eta_0)\\
&= \langle \tilde X_{y_k}, \tilde A_{y_k-y^\circ} \rangle + \trace (A_{y^+}-C)+  \1^T(y^+-y^-).
\end{align*}
By Lemma~\ref{lemma:IP_vs_sdp}, $\langle X_{y_k}, A_{y_k} \rangle=\trace(F(y_k))\leq \trace \hat Y$, so that we obtain
\begin{align*}
 \langle \tilde X_{y_k}, \tilde A_{y_k-y^\circ} \rangle &= \langle X_{y_k}, A_{y^\circ-y_k} \rangle
 \leq \frac{\norm{y^\circ-y_k}_\infty}{y_\sub{\min}} \langle X_{y_k}, A_{y_\sub{\min}\1} \rangle\\
&\leq
\frac{\norm{y^\circ-y_k}_\infty}{y_\sub{\min}} \langle X_{y_k}, A_{y_k} \rangle
 \leq \frac{\norm{y^+-y^-}_\infty}{y_\sub{\min}} \trace \hat Y.
\end{align*}

Moreover, again using Lemma~\ref{lemma:IP_vs_sdp},
\[
\langle \tilde X_{y_k}, \tilde C- \tilde A_{y_k} \rangle
\leq \trace(\hat Y-F(y_k))=r(y_k),
\]
so that we arrive at
\begin{equation*}
\overline z_0-b^T\eta_0\leq r(y_k)+ \epsilon_k K_1.
\end{equation*}
\item To estimate $\det \tilde X_0$, note that $X_{y_k}=A_{y_k}^{-1} B^T B A_{y_k}^{-1}$ has exactly $l$ non-zero eigenvalues, namely the 
eigenvalues of $B A_{y_k}^{-2} B^T\succ 0$. Also, for all $v\in \R^l$,
\begin{align*}
v^T B A_{y_k}^{-2} B^T v= \norm{A_{y_k}^{-1} B^T v}_2^2\geq \frac{1}{\norm{A_{y^+}}_2^2} \norm{B^T v}_2^2
 = \frac{1}{\norm{A_{y^+}}_2^2} v^T B B^T v
\end{align*}
which gives 
$
B A_{y_k}^{-2} B^T   \succeq \frac{B B^T}{\norm{A_{y^+}}_2^2},
$
so that we obtain
\begin{align*}
\det \tilde X_0&=\det (X_{y_k}+\epsilon_k I) \epsilon_k^{2m} \prod_{i=1}^m (1+\trace(A_i))
\geq \epsilon_k^{\tilde n-l} \det (B A_{y_k}^{-2} B^T) \prod_{i=1}^m (1+\trace(A_i))\\
&\geq \epsilon_k^{\tilde n-l} K_2.
\end{align*}
\item Now we turn to the third term $\det\tilde S_{\eta_0}$. We have that
$\operatorname{rank} C=l$ implies that at least $n-l$ eigenvalues of $A_{\eta_0}-C$ are bounded from below by $\lambda_\text{min}(A_{\eta_0})$ and thus by $\lambda_\text{min}(A_{y^-})$.
Moreover, $A_{\eta_0}-C\succeq \epsilon_k (A_{y^\circ}-C)$ gives that the other eigenvalues are bounded from below by
$\epsilon_k \lambda_\text{min} (A_{y^\circ}-C)$. This shows that
\[
\det(A_{\eta_0}-C)\geq \epsilon_k^l (\lambda_\text{min} (A_{y^\circ}-C))^l (\lambda_\text{min}(A_{y^-}))^{n-l}.
\]
Also, the convergence of $y_k\to \hat y$ and $\epsilon_k =\min\{ r(y_k),\frac{1}{2} \}\to 0$ implies that $\eta_0$ gets arbitrarily close to $\hat y$ over the course of Frank-Wolfe iterations.
Hence, for sufficiently large $k$, we have that
\[
y^+-\eta_0>\frac{1}{2}(y^+-\hat y)\geq \frac{\delta}{2}\1,\quad \text{ and } \quad \eta_0-y^->\frac{1}{2}(\hat y- y^-)\geq \frac{\delta}{2}\1.
\]
It follows that
\begin{align*}
\det (\tilde S_{\eta_0})&=\det (\tilde C- \tilde A_{\eta_0})
\geq \epsilon_k^l K_3.
\end{align*}
\end{enumerate}
Inserting (a)--(c) in Lemma~\ref{lemma:dual_scaling_convergence_speed} yields
\begin{align*}
 \overline z_j - b^T \eta_j 
&\leq  (r(y_k)+K_1\epsilon_k)^\frac{\rho}{\rho-\tilde n}  \epsilon_k^{-\frac{\tilde n}{\rho-\tilde n}} (\tilde n^{\tilde n} K_2 K_3)^{-\frac{1}{\rho-\tilde n}} \exp\left({-\frac{1}{50(\rho-\tilde n)}j}\right),
\end{align*}
Hence, for sufficiently large $k\in \N_0$, $\epsilon_k=r(y_k)$, so that
\begin{align*}
 \overline z_j - b^T \eta_j 
&\leq  K^{\frac{1}{\rho-\tilde n}} r(y_k) e^{-\frac{1}{50(\rho-\tilde n)}j}.
\end{align*}
Using that $\overline z_j$ is an upper bound for the dual objective, Theorem~\ref{thm:Frank_Wolfe_Convergence} yields, for all sufficiently large $k\in \N_0$,
\begin{align*}
\overline z_j-b^T y_k&\geq \max_{w\in\Adm}b^Tw-b^Ty_k \geq \gamma r(y_k).
\end{align*}
Combining this with the preceding estimate gives
\begin{align*}
 \overline z_j - b^T \eta_j 
&\leq \frac{K^{\frac{1}{\rho-\tilde n}}}{\gamma} e^{-\frac{1}{50(\rho-\tilde n)}j}
(\overline z_j-b^T y_k).
\end{align*}

Thus, for sufficiently large $k\in\N$, the termination criterion $\overline z_j - b^T \eta_j \leq (1-\theta)(\overline z_j-b^T y_k)$ is fulfilled if
\begin{align*}
 j
\geq 50 \ln \frac{K}{(\gamma(1-\theta))^{\rho-\tilde n}}. 
\end{align*}
This proves that, for all sufficiently large $k\in \N$, the number of inner iteration steps is bounded by
\[
N_k\leq 1+\max\left\{ 50 \ln \frac{K}{(\gamma(1-\theta))^{\rho-\tilde n}}, 0\right\}.
\]
\end{proof}

\subsection{The global search}

For the global search routine, we require a method that is guaranteed to find, after finitely many steps, a point $y_{k+1}\in \Adm$ that fulfills
\begin{equation}\label{eq:b_and_b_goal}
r(y_{k+1})\leq \kappa r(y_k).
\end{equation}

We utilize a monotonicity- and SDP-enhanced branch-and-bound method. Starting with the componentwise half-open box
\[
[y^- + \tfrac{\delta}{2} \1,y^+ - \tfrac{\delta}{2}\1),
\]
as the only entry of a LIFO stack, we repeatedly remove the top box and subdivide it by splitting an edge of maximal relative length, at the geometric mean of its endpoints. In the case of a tie, an edge is chosen with largest corresponding component of $-\nabla \trace F(\eta^+)\geq 0$. The two half-open child boxes form a disjoint partition of their parent box.
Hence, exactly one of the child boxes contains $\hat y$ whenever the parent box does.

Given a child box $Q=[\eta^-,\eta^+)$, we first check both corners. If one of them is strictly feasible and fulfills \eqref{eq:b_and_b_goal}, then we can terminate and return that corner as $y_{k+1}$. Note that we only use strict feasibility tests for our method, as this can be checked easily via a Cholesky factorization, without requiring a tolerance parameter for distinguishing a zero eigenvalue from a small negative one. If neither corner test succeeds, we discard $Q$ if
$\trace F(\eta^-)< \trace\hat Y$, or if $\eta^+$ is not strictly feasible, as either condition implies, by monotonicity, that $\hat y\not \in [\eta^-,\eta^+)$. 
Thus, all remaining boxes fulfill 
\begin{equation}\label{eq:BB_remaining}
F(\eta^+)\prec \hat Y,\quad r(\eta^+)>\kappa r(y_k),\quad \text{ and } \quad F(\eta^-)\not\prec \hat Y.
\end{equation}

We derive a further SDP-based test for discarding boxes, or finding $y_{k+1}$, from Theorem~\ref{thm:FW_with_monotonicity}. 
In the following, for a box $Q=[\eta^-,\eta^+)$, we denote the maximal relative side length with
\[
q_Q:=\max_{i=1,\ldots,m} \frac{\eta^+_i}{\eta^-_i}.
\]
Let $Q=[\eta^-,\eta^+)$ remain after the above monotonicity tests. Then we set 
\[
b:=\nabla \trace F(\eta^+), \quad Q_\delta:=[\eta^-,\eta^+ +\tfrac{\delta}{2}\1],
\]
and solve the SDP
\[
b^T y \to \text{max!}\quad \text{ s.t. } \quad y\in \Adm\cap Q_\delta,
\]
with the dual scaling algorithm from subsection \ref{subsect:sdp_solver} (with $y^-$ and $y^+$ replaced by the local bounds $\eta^-$ and
$\eta^+ +\frac{\delta}{2}\1$, respectively). 
By \eqref{eq:BB_remaining}, $\eta^+$ is strictly feasible for the 
 matrix constraint and in the interior of $Q_\delta$, so that the
iteration can be started with 
\[
\eta_0:=\eta^+, \quad \text{ and } \quad \overline z_0:=-\trace (F(\eta^+)\hat Y^{-1} F(\eta^+)).
\]
The latter is an upper bound for the dual objective by Lemma~\ref{lemma:IP_vs_sdp}.

Let $\eta_j\in \Adm\cap Q_\delta$ and $\overline z_j\in \R$ denote the dual iterates, and the dual-objective upper bounds, obtained after $j$ steps of the dual scaling iteration. If $\hat y\in Q$, then $\eta^+> \hat y$, and Theorem \ref{thm:FW_with_monotonicity} gives that, for all $j\in \N_0$,
\begin{equation}\label{eq:box_sdp_failure_crit}
\overline z_j-b^T \eta^+\geq \frac{1}{q_Q} r(\eta^+).
\end{equation}
Moreover, as in subsection \ref{subsect:sdp_solver}, if for some $\theta_Q\in (0,1)$,
\begin{equation*}
\overline z_{j+1}-b^T \eta_{j+1}\leq (1-\theta_Q) (\overline z_{j+1}-b^T \eta^+),
\end{equation*}
then $\eta_{j+1}$ fulfills
\begin{equation}\label{eq:box_sdp_success_crit}
b^T (\eta_{j+1}-\eta^+)\geq \theta_Q (\overline z_{j+1}-b^T \eta^+).
\end{equation}
Hence, by Theorem \ref{thm:FW_with_monotonicity}, if $\overline z_{j+1}$ fulfills \eqref{eq:box_sdp_failure_crit} and $\eta_{j+1}$ satisfies \eqref{eq:box_sdp_success_crit}
for some $\theta_Q\in (0,1)$, then
\begin{equation}
r(\eta_{j+1})\leq \left( 1-\frac{\theta_Q}{q_Q}\right) r(\eta^+).
\end{equation}
Consequently, if a box $Q$ is small enough to fulfill
\begin{equation}\label{eq:BB_smallness}
\theta_Q:=\frac{q_Q}{r(\eta^+)} (r(\eta^+)-\kappa r(y_k))<1,
\end{equation}
then, with the same argument as in Lemma~\ref{lemma:sdp_terminates}, we can terminate, after finitely many iterations, either with
$\overline z_{j+1}$ violating \eqref{eq:box_sdp_failure_crit}, or with $\eta_{j+1}$ fulfilling \eqref{eq:box_sdp_success_crit}.
In the first case, we can discard the box, as it cannot contain $\hat y$. In the second case, we can terminate the global search as
$y_{k+1}:=\eta_{j+1}$ fulfills \eqref{eq:b_and_b_goal}.

We summarize the resulting monotonicity- and SDP-enhanced branch-and-bound method in Algorithm \ref{algo:global_solver}. Our choice of a LIFO stack effectively limits the number of boxes stored simultaneously, and our splitting rule is motivated by reducing 
the maximal relative side length $q_Q$ of the boxes. The tie-rule based on the components of $-\nabla\trace F(\eta^+)$ is a heuristic choice that prioritizes components with the largest local influence on $\trace F$ in order to discard boxes as early as possible. 
Let us also remark that for each child box, one of the corners agrees with a corner of the parent box, so the $F$-evaluations should be cached, and only the new corner has to undergo the monotonicity checks as explained above. We omit this in the formulation of Algorithm \ref{algo:global_solver} for the sake of brevity.

\begin{algorithm}
\caption{Monotonicity- and SDP-enhanced branch-and-bound algorithm}
\label{algo:global_solver}
\begin{algorithmic}
\State \textbf{Input:} $y_k\in\Adm\setminus\{\hat y\}$, $\kappa>0$
\State \textbf{if} $r(y^+ -\frac{\delta}{2}\1)\leq \kappa r(y_k)$, \textbf{then} \textbf{return} $y_{k+1}:=y^+ -\frac{\delta}{2}\1$; \textbf{end if}
\State \textbf{initialize} a LIFO stack with $[y^- +\frac{\delta}{2}\1,y^+ -\frac{\delta}{2}\1)$ as its only entry
\Repeat{%
\State \textbf{remove} the top box $Q:=[\eta^-,\eta^+)$ from the stack
\State \textbf{subdivide} $Q$ by splitting an edge with relative length $q_Q$ at its geometric mean
\State \hfill (in the case of a tie, choose edge with larger component of $-\nabla \trace F(\eta^+)\geq 0$)

\Statex \vspace{-2ex}
\For{each child box $Q=[\eta^-,\eta^+)$}
\Statex \vspace{-2ex}

\State \textit{\% Monotonicity tests:}

\State \textbf{if} $F(\eta^-)\prec\hat Y$ and $r(\eta^-)\leq\kappa r(y_k)$, \textbf{then} \textbf{return} $y_{k+1}:=\eta^-$; \textbf{end if}
\State \textbf{if} $F(\eta^+)\prec \hat Y$ and $r(\eta^+)\leq\kappa r(y_k)$, \textbf{then} \textbf{return} $y_{k+1}:=\eta^+$; \textbf{end if}
\State \textbf{if} $\trace F(\eta^-)<\trace\hat Y$ or $F(\eta^+)\not\prec\hat Y$, \textbf{then} \textbf{discard} $Q$ and \textbf{continue}; \textbf{end if}

\Statex \vspace{-2ex}
\State \textit{\% SDP test:}
%
\If{$\theta_Q:=\frac{q_Q}{r(\eta^+)} \bigl(r(\eta^+)-\kappa r(y_k)\bigr)<1$}
    \State \textbf{set} $b:=\nabla\trace F(\eta^+)$, and $Q_\delta:=[\eta^-,\eta^+ +\frac{\delta}{2}\1]$
    \State \textbf{set} $\eta_0:=\eta^+$, and $\overline z_0:=-\trace (F(\eta^+)\hat Y^{-1} F(\eta^+))$
    \Statex \vspace{-2ex} 
    \For{$j=0,1,2,\ldots$}
     \State \textbf{set} $\eta_{j+1}$ and $\overline z_{j+1}$ by dual scaling \eqref{eq:SDP_dualscaling1}--\eqref{eq:SDP_update_eta} applied to $\Adm\cap Q_\delta$
     \If{$\overline z_{j+1}$ violates \eqref{eq:box_sdp_failure_crit}}
        \State \textbf{discard} $Q$ and \textbf{break}
     \ElsIf{$\eta_{j+1}$ fulfills \eqref{eq:box_sdp_success_crit}}
         \State \Return $y_{k+1}:=\eta_{j+1}$
     \EndIf
    \EndFor
\Else
    \State \textbf{insert} $Q$ on top of the stack
\EndIf

\EndFor
}
\Until{$y_{k+1}$ is returned}

\Statex \vspace{-2ex}
\State \textbf{Output:} $y_{k+1}\in\Adm$ with
$r(y_{k+1})\leq\kappa r(y_k)$
\end{algorithmic}
\end{algorithm}

\begin{lemma}
Let $y_k\in \Adm$, $y_k\neq \hat y$, and $\kappa>0$. Algorithm~\ref{algo:global_solver} terminates after finitely many steps
and returns $y_{k+1}\in \Adm$ that fulfills $r(y_{k+1})\leq \kappa r(y_k)$. 

If the Algorithm~\ref{algo:global_solver} does not terminate during its first pass through the repeat loop, then the number of boxes on the stack maintained by Algorithm~\ref{algo:global_solver} is bounded by
\[
m+m\left \lfloor \log_2  \frac{\ln q_0}{\ln q_\text{min}}\right\rfloor,
\] 
where
\[
q_0:=\max_{i=1,\ldots,m} \frac{y^+_i-\frac{\delta}{2}}{y^-_i+\frac{\delta}{2}}> 1,
\quad \text{ and } \quad
q_\text{min}:=1+\frac{\kappa r(y_k)}{r(y^+-\frac{\delta}{2}\1)-\kappa r(y_k)}> 1.
\]
\end{lemma}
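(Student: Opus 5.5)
The plan is to prove three things: every box that is pushed onto the stack has relative side length bounded below by $q_\text{min}$; the dyadic-in-logarithm splitting gives a uniform bound on the depth of the branch-and-bound tree; and the box containing $\hat y$ is never wrongly discarded. Finiteness of the depth-first search together with soundness of the discarding rules then forces termination with a valid $y_{k+1}$. Everything is done in logarithmic coordinates $\ln \eta_i$. In these coordinates, splitting at the geometric mean halves the logarithmic edge length $\ln(\eta_i^+/\eta_i^-)$, and $\ln q_Q$ is the maximal logarithmic edge length.

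\emph{Step 1 (which boxes are pushed).} We may assume $r(y^+-\frac{\delta}{2}\1)>\kappa r(y_k)$, since otherwise the algorithm returns immediately. For every box $Q=[\eta^-,\eta^+)$ generated we have $\eta^+\leq y^+-\frac{\delta}{2}\1$. By monotonicity of $F$ (Lemma~\ref{lemma:F_properties}(b)), $r$ is non-decreasing in $y$, so $r(\eta^+)\leq r(y^+-\frac{\delta}{2}\1)$. A box reaching the SDP test satisfies \eqref{eq:BB_remaining}, hence $r(\eta^+)>\kappa r(y_k)>0$. A short rearrangement then shows
\[
\theta_Q<1 \quad\Longleftrightarrow\quad q_Q<1+\frac{\kappa r(y_k)}{r(\eta^+)-\kappa r(y_k)}.
\]
The right-hand side is at least $q_\text{min}$. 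Consequently every box with $q_Q<q_\text{min}$ is handled by the SDP test, and every box inserted on the stack satisfies $\ln q_Q\geq \ln q_\text{min}$.

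\emph{Step 2 (depth and stack bound).} I will prove by induction the standard fact for longest-edge bisection: after $mt$ successive splits along a path starting from the root box, every logarithmic edge length is at most $\ln q_0/2^t$. The induction works because each split halves a currently maximal edge, so within $m$ further splits every edge exceeding $\ln q_0/2^{t+1}$ gets halved. Combined with Step 1, a pushed box at depth $d$ must satisfy $\ln q_0/2^{\lfloor d/m\rfloor}\geq\ln q_\text{min}$. This bounds the depth by $m+m\lfloor\log_2(\ln q_0/\ln q_\text{min})\rfloor$. Because the stack is LIFO, it holds at most one pending box per depth level below the box currently being processed. (After the first pass, each level contributes at most one unexplored sibling.) This yields the stated bound on the number of stacked boxes. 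It also shows that the whole tree explored is finite, so only finitely many SDP tests are started.

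\emph{Step 3 (soundness and termination).} The root box contains $\hat y$ by the margin assumption $\hat y\in[y^-+\delta\1,y^+-\delta\1]$. Since the children form a disjoint partition of their parent, exactly one box per level contains $\hat y$. I will show this box is never discarded.
\begin{itemize}
\item If $\hat y\in Q$, then $\eta^-\leq\hat y<\eta^+$. Monotonicity gives $\trace F(\eta^-)\geq\trace\hat Y$ and $F(\eta^+)\preceq\hat Y$. For strictness of the second inequality, I use that $\eta^+\geq\hat y$ with $\eta^+\neq\hat y$ and $A_\1\succ0$, so that $A_{\eta^+}\succ A_{\hat y}\succeq C$, and then apply Lemma~\ref{lemma:Schur_argument}. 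Hence neither monotonicity discard rule fires.
\item In the SDP test, $\eta^+\geq\hat y$ together with Theorem~\ref{thm:FW_with_monotonicity}(a), applied with the local box $Q_\delta\ni\hat y$ (which makes $\hat y$ feasible, so $\overline z_j\geq b^T\hat y$), shows that \eqref{eq:box_sdp_failure_crit} can never be violated.
\end{itemize}
The argument of Lemma~\ref{lemma:sdp_terminates} applies to each SDP test: Benson--Ye--Zhang gives $\overline z_j-b^T\eta_j\to0$, so every SDP test ends after finitely many steps, either by discarding $Q$ or by returning a point. Since the tree is finite by Step 2 and the $\hat y$-chain cannot be discarded, the search cannot exhaust the stack. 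It must therefore return, at the latest when the $\hat y$-box reaches $q_Q<q_\text{min}$.

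Returned points lie in $\Adm$: returned corners are strictly feasible and lie in $[y^-,y^+]$, and SDP iterates lie in $\Adm\cap Q_\delta$. Their residual bound $r(y_{k+1})\leq\kappa r(y_k)$ holds either by the explicit corner check or by \eqref{eq:BB_smallness} together with Theorem~\ref{thm:FW_with_monotonicity}(b). The main obstacle I expect is the bookkeeping in Step 2: turning the longest-edge bisection estimate and the LIFO structure into exactly the constant $m+m\lfloor\log_2(\ln q_0/\ln q_\text{min})\rfloor$, including the off-by-one issues at depth boundaries.
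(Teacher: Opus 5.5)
Your proposal is correct and follows essentially the same route as the paper. A pushed box has $\theta_Q\ge 1$, hence $q_Q\ge q_{\min}$; longest-edge bisection at geometric means halves the maximal logarithmic edge every $m$ splits; the LIFO discipline ties stack size to depth; and the box containing $\hat y$ is never discarded, so the finite search must return a valid point.

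Two small touch-ups. First, justify $F(\eta^+)\prec\hat Y$ through the componentwise strict inequality $\hat y<\eta^+$, which gives $A_{\eta^+-\hat y}\succeq \min_i(\eta^+_i-\hat y_i)\,A_{\1}\succ 0$; the condition $\eta^+\ge\hat y$, $\eta^+\neq\hat y$ does not suffice when the $A_i$ are only semidefinite. Second, to get the exact constant, use $\lfloor d/m\rfloor\le\lfloor\log_2(\ln q_0/\ln q_{\min})\rfloor=:s$ to obtain $d\le m(s+1)-1$, and combine this with the fact that the LIFO stack holds at most one box more than the maximal depth of pushed boxes.
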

\begin{proof}
Since $y_k\in\Adm\setminus\{\hat y\}$, we have $r(y_k)>0$. If $r(y^+-\frac{\delta}{2}\1)\leq \kappa r(y_k)$ then Algorithm~\ref{algo:global_solver} immediately terminates. Otherwise, $q_{\min}>1$ is well-defined, and, by construction, the stack will always contain a box containing $\hat y$ as this box will never be discarded.
Also, by construction and the above use of Theorem \ref{thm:FW_with_monotonicity}, the algorithm only terminates after finding $y_{k+1}\in \Adm$ that fulfills \eqref{eq:b_and_b_goal}.

After the first step of the algorithm, every box on the stack fulfills 
\begin{equation}\label{eq:min_box_length}
q_Q\geq \frac{r(\eta^+)}{r(\eta^+)-\kappa r(y_k)}= 1+ \frac{\kappa r(y_k)}{r(\eta^+)-\kappa r(y_k)}\geq
1+ \frac{\kappa r(y_k)}{r(y^+-\frac{\delta}{2}\1)-\kappa r(y_k)}= q_\text{min},
\end{equation}
since otherwise \eqref{eq:BB_smallness} is fulfilled, and the SDP part either discards the box, or finds $y_{k+1}\in \Adm$ that fulfills \eqref{eq:b_and_b_goal}. Moreover, as we only split edges with maximal relative length, \eqref{eq:min_box_length} also shows that edges with relative length below $\sqrt{q_\text{min}}$ will not be split. Hence, there can only be finitely many box splittings, so that the algorithm terminates after finitely many steps.

To estimate the maximal size of the stack, note that, if $q_0<q_\text{min}$, then the stack is initialized with one box and the algorithm finishes in the first iteration. We now consider the case $q_0\geq q_\text{min}$. Since we use a LIFO stack, and split edges with maximal relative length, after $m$ steps the maximal relative edge length is decreased to its square root. Every step removes one box from the stack and adds up to two new boxes. Hence, for all $s\in \N$, the $(1+ms)$-th box on the stack (if there is one) can have maximal relative length of $\sqrt[2^s]{q_0}$. Therefore, the $(1+ms^*)$-th box on the stack, where
\[
s^*=\min\{ s\in \N_0:\ \sqrt[2^s]{q_0}<q_\text{min} \} = 1+\left \lfloor \log_2  \frac{\ln q_0}{\ln q_\text{min}}\right\rfloor,
\]
would have maximal relative length smaller than $q_\text{min}$. As this contradicts \eqref{eq:min_box_length}, the 
bound on the number of boxes on stack is proven.
\end{proof}

\section{Application to the discretized Calder\'on problem}

We consider the discretized Calder\'on problem that arises in Electrical Impedance Tomography (EIT) with finitely many measurements and unknowns after FEM-discretization. We first summarize how this problem can be written in the matrix form considered in this work, and then we discuss an efficient implementation and dimensionality reduction.

\subsection{EIT with continuous boundary measurements}\label{subsect:EIT_FEM}

Let $\Omega$ be a bounded Lipschitz domain in dimension larger than $1$, and let $\nu$ denote the unit outer normal on $\partial \Omega$. We write $L^\infty_+(\Omega)$ for the subset of $L^\infty$-functions 
with positive essential infima. $H_\diamond^1(\Omega)$ and $L^2_\diamond(\partial \Omega)$ denote $H^1(\Omega)$, respectively, $L^2(\partial \Omega)$ with vanishing integral mean on $\partial \Omega$.

Driving an electrical current $g\in L^2_\diamond(\partial \Omega)$ through the boundary of a conducting domain $\Omega$ gives rise to an electrical potential $u\in H^1_\diamond(\Omega)$ that solves
\begin{equation}\label{eq:EIT}
\nabla \cdot (\sigma \nabla u)=0 \quad \text{ in } \Omega, \quad \text{ and } \quad \sigma\partial_\nu u|_{\partial \Omega}=g \quad \text{ on } \partial\Omega,
\end{equation}
or, equivalently, 
\begin{equation}\label{eq:var_form_EIT}
\int_\Omega \sigma \nabla u \cdot \nabla v \dx=\int_{\partial \Omega} g v \dx[s] \quad \text{ for all } v\in H^1_\diamond(\Omega).
\end{equation}

We aim to reconstruct the conductivity $\sigma\in L^\infty_+(\Omega)$ from measuring $u^{(g_k)}|_{\partial \Omega}$ for finitely many, linearly independent boundary currents $g_1,\ldots,g_l\in L^2_\diamond(\partial \Omega)$.
We assume that 
\[
\sigma(x)=\sum_{i=1}^m y_i \chi_{\mathcal P_i}(x)
\]
is piecewise constant with respect to a given pixel partition $\Omega=\bigcup_{i=1}^m \mathcal P_i$, and that we can measure the projection of the corresponding solutions $u_\sigma^{(g_k)}|_{\partial \Omega}$ to the $l$-dimensional subspace of $L^2_\diamond(\partial \Omega)$ that is spanned by $g_1,\ldots,g_l$. This is a finite-dimensional Galerkin projection of the infinite-dimensional Neu\-mann-to-Dirichlet (NtD) operator for the PDE \eqref{eq:EIT}.
Using \eqref{eq:var_form_EIT}, it follows that the measurements form a matrix $\mathcal Y\in \sS^l_{++}$ with entries
\[
\mathcal Y_{jk}:=\int_{\partial \Omega}  g_j u_\sigma^{g_k}|_{\partial \Omega}\dx[s]
=\int_\Omega \sigma \nabla u_\sigma^{g_j}\cdot \nabla u_\sigma^{g_k} \dx, \quad j,k=1,\ldots,l.
\]
The reconstruction of finitely many unknowns from finitely many measurements in this setting 
is an inverse problem for the finite-dimensional matrix-valued mapping 
\[
\mathcal F:\ \R^m_{++}\to \sS^l,\quad y\mapsto \mathcal Y,
\]
where the evaluation of $\mathcal F$ requires solving infinite-dimensional problems, i.e., the PDE \eqref{eq:EIT}.

Solving the PDE by a finite element method (FEM) corresponds to 
restricting the variational formulation \eqref{eq:var_form_EIT} to a finite-dimensional subspace $V_\sub{FEM}\subset H_\diamond^1(\Omega)$ with basis $\varphi_1,\ldots, \varphi_n\in V_\sub{FEM}$. 
The PDE solution $u^{(g_k)}\in H_\diamond^1(\Omega)$ that solves \eqref{eq:var_form_EIT} for all $v\in H_\diamond^1(\Omega)$, and $\sigma=\sum_{i=1}^m y_i \chi_{\mathcal P_i}(x)$  is thus approximated by
\[
\tilde u^{(g_k)}:=\sum_{j_1=1}^n \tilde u_{j_1}^{(g_k)} \varphi_{j_1}(x)\in V_\sub{FEM},
\]
which solves \eqref{eq:var_form_EIT} for all $\varphi_1,\ldots,\varphi_n$, i.e.
\[
\sum_{i=1}^m y_i   \sum_{j_1=1}^n \tilde u_{j_1}^{(g_k)} \int_{\mathcal P_i} \nabla\varphi_{j_1}(x) \cdot \nabla \varphi_{j_2} \dx= \int_{\partial \Omega}  g_k \varphi_{j_2}|_{\partial \Omega}\dx[s] \quad \text{ for all } j_2=1,\ldots,n.
\]
We collect the values 
\[
\int_{\mathcal P_i} \nabla\varphi_{j_1} \cdot \nabla \varphi_{j_2} \dx,\quad \text{ and } \quad
\int_{\partial \Omega}  g_k \varphi_{j_2}|_{\partial \Omega}\dx[s],
\]
for $j_1,j_2=1,\ldots,n$, respectively, for $k=1,\ldots,l$ and $j_2=1,\ldots,n$ into 
the pixel-wise stiffness matrix $A_i\in \R^{n\times n}$ and the boundary matrix $B\in \R^{l\times n}$.

Identifying $\tilde u^{(g_k)}\in V_\text{FEM}$ with its vector of expansion coefficients in $\R^n$, this yields that
\[
\sum_{i=1}^m y_i A_i \tilde u^{(g_k)} = B^T e_k,
\]
and the $(j,k)$-th EIT measurement is thus approximated by
\[
\int_{\partial \Omega} g_j \tilde u^{(g_k)}|_{\partial \Omega} \dx[s]=e_j^T B A_y^{-1} B^T e_k,\quad \text{ with } \quad
A_y=\sum_{i=1}^m y_i A_i.
\]
This shows that the matrix $Y\in \sS^{l}$ of FEM-approximated measurements for the pixel conductivity values $y\in \R^m$ is given by the finite-dimensional mapping
\[
F:\ \R^m_{++}\to \sS^l, \quad F(y)=B A_y^{-1} B^T.
\]
Note that the coercivity of the variational formulation implies that $A_\1\succ 0$, and $B$ is surjective if the FEM mesh is fine enough to resolve the boundary currents.

\subsection{Efficient numerical implementation}\label{subsect:implementation}

We summarize some remarks on efficiently implementing the algorithm that go beyond standard good practice, such as caching intermediate results, exploiting sparsity, and half-vectorization of symmetric matrices.

\paragraph{Static condensation by skeleton reduction}
In a standard FEM implementation, the pixel-interior degrees of freedom can be eliminated by static condensation, leaving only interface and exterior-boundary degrees of freedom, see, e.g.\ \cite[Sec.~8.4.3]{Ern04}. A particularly simple way to exploit this property can be formulated as a skeleton decomposition. For $i=1,\ldots,m$, we define index sets $J_i\subseteq \{1,\ldots,n\}$ by
\begin{equation*}
J_i:=\{ k\in \{1,\ldots,n\}:\ Be_k=0 \text{ and } A_j e_k=0 \ \text{ for all } j\neq i\},
\end{equation*}
and set $J_\text{skel}:=\{1,\ldots, n\}\setminus \bigcup_{i=1}^m J_i$. Then, with
\begin{align*}
A_i^\text{skel} &=(A_i)_{(J_\text{skel},J_\text{skel})} -(A_i)_{(J_\text{skel},J_i)} (A_i)_{(J_i,J_i)}^{-1}
 (A_i)_{(J_i,J_\text{skel})}, \label{eq:direct_skeleton_A}
\end{align*}
and $B^\text{skel}=B_{(\cdot,J_\text{skel})}$, we have that, for all $y\in \R^m_{++}$, 
\[
B A_y^{-1} B^T = B^\text{skel} \left(A_y^\text{skel} \right)^{-1} (B^\text{skel})^T.
\]
Also, $A_i^\text{skel}$ and $B^\text{skel}$ still fulfill our assumptions as introduced in subsection \ref{subsect:setting}. 

We give an elementary proof for this in Lemma~\ref{lemma:skeleton}. 
Let us stress again that this is essentially static condensation as $J_i$ contains the indices of pixel-interior nodes. But Lemma~\ref{lemma:skeleton} holds independently of this interpretation as it
is based solely on the structure of the matrices $A_i$, and $B$. This allows us to reduce the dimension $n$ without any knowledge of the FEM implementation details. In our implementation, we use this as a first preprocessing step.

\kommentar{
\paragraph{Low rank factorizations}

The pixel-wise stiffness matrices $A_i$ are usually of low rank. After the dimension reduction by skeleton decomposition (we do not change notation), we therefore apply another preprocessing step, where we calculate $R_i\in \R^{r_i\times n}$ with
\[
A_i=R_i^T R_i.
\]
Then, in the dual scaling iteration, the calculation of ?? can be replaced by ??. Also, the calculation of ?? can be replaced by ??.
}

\paragraph{Dual scaling step-size}
The choice $\alpha=0.4$ in \eqref{eq:SDP_update_eta} guarantees strict
feasibility and a decrease of the Tanabe-Todd-Ye potential function by
at least $\frac{1}{50}$, cf.\ Lemma~\ref{lemma:dual_scaling_convergence_speed}(a), and the underlying result in \cite[Thm.~2]{benson2000solving}.
Our convergence result and the bound on the iteration number stay valid for larger step sizes $\alpha>0.4$ as long as they still fulfill these two properties.
Hence, we can proceed as in \cite{benson2008algorithm,toh2002steplengths}. We calculate the maximal $\alpha$ for which feasibility holds. Starting with $0.95$ of this value, we then geometrically decrease it until we either find $\alpha>0.4$ for which the potential decrease is larger than $\frac{1}{50}$, or we fall back to $\alpha=0.4$. To calculate the maximal feasible $\alpha$ note that, for a strictly dual feasible $y$ and a direction $d\in \R^m$, and step size $s>0$,
\[
A_{y+sd}\succ C \quad \text{ if and only if } 
s\lambda<1,
\]
where $\lambda:=\lambda_{\mathrm{max}}(-A_d,A_y-C)$ denotes the largest generalized eigenvalue, that
is, $-A_dv=\lambda(A_y-C)v$ for some $v\neq0$. 
It can be estimated using MATLAB's \texttt{eigs} function (which uses the Lanczos iteration recommended in \cite{toh2002steplengths}). The decrease in the Tanabe--Todd--Ye potential function
can be efficiently calculated via Lemma~\ref{lemma:potential_decrease}.

\kommentar{
\paragraph{Combination with other solvers}
Let us also emphasize that our global solver can accept and proceed with whatever $y_{k+1}$ as long as it is admissible and fulfills the globalization condition. This is typically computationally cheap to check as it only requires an inversion of $A_{y_{k+1}}$ which is typically needed anyway in the iteration. Hence, all subroutine iterates including dual scaling iterates, additional local solver, or global solver iterates should always be checked for this property to allow early termination.

Moreover, regarding the choice of algorithm design parameters, let us note that it seems natural to update $b$ as often as possible, as this does not come with any additional cost for the dual scaling method. Also, a fallback to the computationally expensive global solver should be avoided when possible. This motivates to choose $\theta>0$ close to zero, and $\kappa$ close to $1$.}

\begin{appendices}

\section{Efficient numerical implementation}
\label{app:implementation}

Here we collect some useful details for an efficient numerical implementation as summarized in subsection \ref{subsect:implementation}.

\begin{lemma}\label{lemma:skeleton}
Let $0\neq A_i\in \sS^n_+$, $i=1,\ldots,m$, $A_\1\succ 0$, and let $B\in \R^{l\times n}$ have full row rank.
For all $i=1,\ldots,m$, we define an index set $J_i\subseteq \{1,\ldots,n\}$ by
\begin{equation}\label{eq:skel_Ji}
J_i:=\{ k\in \{1,\ldots,n\}:\ Be_k=0 \text{ and } A_j e_k=0 \ \text{ for all } j\neq i\},
\end{equation}
and set $J_\text{skel}:=\{1,\ldots, n\}\setminus \bigcup_{i=1}^m J_i$. Then 
\begin{enumerate}[(a)]
\item The sets $J_\text{skel}$, $J_1$, \ldots, $J_m$ are pairwise disjoint and form a partition of $\{1,\ldots,n\}$.
\item $(A_i)_{(J_i,J_i)}$ is invertible for all $i=1,\ldots,m$.
\item Every $u=A_y^{-1} B^T g$ with $y\in \R^m_{++}$ and $g\in \R^l$ satisfies, for all $i=1,\ldots,m$,
\[
u_{(J_i)}=-(A_i)_{(J_i,J_i)}^{-1}   (A_i)_{(J_i,J_\text{skel})} u_{(J_\text{skel})}.
\]
\item With $n_\text{skel}=|J_\text{skel}|$, define $P\in \R^{n\times n_\text{skel}}$ by
\[
P_{(J_\text{skel},:)}=I,\quad \text{ and } \quad P_{J_i,:}=-(A_i)_{(J_i,J_i)}^{-1}   (A_i)_{(J_i,J_\text{skel})}.
\]
Then, with
\[
A_i^\text{skel}=P^T A_i P\in \sS^{n_\text{skel}}, \quad \text{ and } \quad B^\text{skel}=BP\in \R^{l\times n_\text{skel}}
\]
we have that $A_i^\text{skel}\succeq 0$, and $B^\text{skel}$ has full row rank. Moreover, for all $y\in \R^m_{++}$, and $A_y^\text{skel}:=\sum_{i=1}^m y_i A_i^\text{skel}$, we have that $A_y^\text{skel}\succ 0$, and that
\[
B A_y^{-1} B^T = B^\text{skel} \left(A_y^\text{skel} \right)^{-1} (B^\text{skel})^T.
\]
\item $A_i^\text{skel}$ and $B^\text{skel}$ fulfill 
\begin{align}
A_i^\text{skel}
&=(A_i)_{(J_\text{skel},J_\text{skel})}
 -(A_i)_{(J_\text{skel},J_i)}
 (A_i)_{(J_i,J_i)}^{-1}
 (A_i)_{(J_i,J_\text{skel})}, \label{eq:direct_skeleton_A}\\
B^\text{skel}&=B_{(\cdot,J_\text{skel})}. \label{eq:direct_skeleton_B}
\end{align}
\end{enumerate}
\end{lemma}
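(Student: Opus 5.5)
We prove the five parts in order, working directly from the definition \eqref{eq:skel_Ji}. For (a), disjointness of $J_i$ and $J_j$ for $i\neq j$ is the only point to check. If $k\in J_i\cap J_j$, then $A_le_k=0$ for every $l\neq i$ and also for every $l\neq j$, hence for all $l$. This gives $A_\1 e_k=0$, contradicting $A_\1\succ 0$. Since $J_\text{skel}$ is defined as the complement of $\bigcup_i J_i$, the sets form a partition.

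For (b), take $v$ supported on $J_i$ with $(A_i)_{(J_i,J_i)}v_{(J_i)}=0$. Then $v^TA_iv=0$. For $j\neq i$ we have $A_jv=0$, since $A_je_k=0$ for every $k\in J_i$. Hence $v^TA_\1v=0$, so $v=0$.

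For (c), write $A_yu=B^Tg$ and read off the rows indexed by $J_i$. The matrix $B^T$ vanishes on these rows, because $Be_k=0$ for $k\in J_i$. Each $A_j$ with $j\neq i$ also vanishes on these rows, by symmetry of $A_j$ and $A_je_k=0$. So the $J_i$-rows of the equation reduce to $y_i(A_i)_{(J_i,:)}u=0$. The column block structure of $A_i$ now matters: $(A_i)_{(J_i,J_j)}=0$ for $j\neq i$, since $A_ie_k=0$ for $k\in J_j$. Hence
\[
(A_i)_{(J_i,J_i)}u_{(J_i)}+(A_i)_{(J_i,J_\text{skel})}u_{(J_\text{skel})}=0,
\]
and (b) gives the claimed formula.

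For (d), the semidefiniteness $A_i^\text{skel}\succeq 0$ is immediate. Since $P$ has an identity block, it has full column rank, so $A_y^\text{skel}=P^TA_yP\succ 0$. For full row rank of $B^\text{skel}$, use that $B$ vanishes on the columns $J_i$, so $BP=B_{(\cdot,J_\text{skel})}$ and $B=B^\text{skel}P_0$, where $P_0$ is the restriction to $J_\text{skel}$. If $(B^\text{skel})^Tg=0$, then $B^Tg=0$, so $g=0$. This also settles \eqref{eq:direct_skeleton_B}.

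The main identity in (d) is the key step. By (c), $u=A_y^{-1}B^Tg$ lies in the range of $P$, so $u=Pw$ with $w=u_{(J_\text{skel})}$. Multiplying $A_yPw=B^Tg$ by $P^T$ gives $A^\text{skel}_yw=(B^\text{skel})^Tg$. Therefore
\[
BA_y^{-1}B^Tg=BPw=B^\text{skel}(A_y^\text{skel})^{-1}(B^\text{skel})^Tg.
\]

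For (e), expand $P^TA_iP$ blockwise. Only the $J_\text{skel}$ and $J_i$ blocks of $A_i$ contribute, because $A_i$ has zero rows and columns on every $J_j$ with $j\neq i$. The standard Schur-complement expansion with $P_{(J_i,:)}=-(A_i)_{(J_i,J_i)}^{-1}(A_i)_{(J_i,J_\text{skel})}$ then yields \eqref{eq:direct_skeleton_A}.

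The only delicate point is this block bookkeeping: tracking which off-diagonal blocks vanish, which uses the symmetry of the $A_j$.
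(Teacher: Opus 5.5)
Your proof is correct and follows the paper's argument essentially step by step. Disjointness of the $J_i$ and invertibility of $(A_i)_{(J_i,J_i)}$ both come from $A_{\1}\succ 0$, the $J_i$-rows of $A_y u = B^T g$ give (c), and $u = P u_{(J_{\text{skel}})}$ yields the condensed identity. The one minor deviation is that you get full row rank of $B^{\text{skel}}$ directly from $B = B^{\text{skel}} P_0$, which also gives $B^{\text{skel}} = B_{(\cdot,J_{\text{skel}})}$ at once; the paper instead infers it from $B A_y^{-1} B^T = B^{\text{skel}} (A_y^{\text{skel}})^{-1} (B^{\text{skel}})^T$, whose left-hand side has rank $l$.
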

\begin{proof}
If $k\in J_i\cap J_j$ then $A_je_k=0$ for all $j=1,\ldots,m$ with $j\neq i$, so that $A_\1 e_k=0$ which contradicts $A_\1\succ 0$. Also, $A_\1\succ 0$ implies that
$(A_i)_{(J_i,J_i)}=(A_\1)_{(J_i,J_i)}\succ 0$, so that (a) and (b) are proven.

For all $u\in \R^n$, and $i=1,\ldots,m$, we have
\[
A_i u = (A_i)_{(\cdot,J_\text{skel})} u_{(J_\text{skel})} + (A_i)_{(\cdot,J_i)} u_{(J_i)}.
\]
For $u=A_y^{-1} B^T g$, with $y\in \R^m_{++}$ and $g\in \R^l$, we have, for all $k\in J_i$, 
\[
0=e_k^T B^T g= e_k^T A_y u = y_i e_k^T  A_i u = y_i e_k^T (A_i)_{(\cdot,J_\text{skel})} u_{(J_\text{skel})} + y_i e_k^T (A_i)_{(\cdot,J_i)} u_{(J_i)}
\]
This shows that every $u=A_y^{-1} B^T g$ with $y\in \R^m_{++}$ and $g\in \R^l$ fulfills
\[
0=(A_i)_{(J_i,J_\text{skel})} u_{(J_\text{skel})} + (A_i)_{(J_i,J_i)} u_{(J_i)},
\]
so that (c) is proven. 

Moreover, for all $u=A_y^{-1} B^T g$ with $y\in \R^m_{++}$ and $g\in \R^l$, $u=Pu_{(J_\text{skel})}$, and thus
\[
P^T A_y P u_{(J_\text{skel})}= P^T B^T g
\]
By definition, $\operatorname{rank}P=n_\text{skel}$, so that $P$ is injective, which gives
$A_y^\text{skel}\succ 0$. Hence,
\[
u_{(J_\text{skel})}= (P^T A_y P)^{-1} P^T B^T g.
\]
Hence, for all $y\in \R^m_{++}$, $g\in \R^l$, and $u:=A_y^{-1} B^T g$
\[
BA_{y}^{-1}B^T g=  B P u_{(J_\text{skel})} = BP (P^T A_y P)^{-1} P^T B^T g
\]
This also implies that $B^\text{skel}=BP$ has full row rank.

(e) follows from (d) using the definition of $P$ and the fact that our definition \eqref{eq:skel_Ji} implies that $B_{(\cdot,J_j)}=0$ for all $j=1,\ldots m$, and that for $j\neq i$ we also have
$(A_i)|_{(J_j,:)}=0$ and $(A_i)|_{(:,J_j)}=0$.
\kommentar{ 
To prove (e), note that, by definition \eqref{eq:skel_Ji}, $(A_i)|_{:,J_j}=0$ and $(A_i)|_{J_j,:}=0$, for all $j\neq i$. Together with $P_{(J_\text{skel},:)}=I$, and $P_{J_i,:}=-(A_i)_{(J_i,J_i)}^{-1}   (A_i)_{(J_i,J_\text{skel})}$, this gives that 
\begin{align*}
(A_i P)_{(J_j,:)}&=0\\
(A_i P)_{(J_i,:)}&=A_i|_{(J_i,:)} P =A_i|_{(J_i,J_i)} P_{(J_i,:)}+ A_i|_{(J_i,J_\text{skel})} P_{(J_\text{skel},:)}=0,\\
(A_i P)_{(J_\text{skel},:)}&=A_i|_{(J_\text{skel},:)} P =A_i|_{(J_\text{skel},J_i)} P_{(J_i,:)}+ A_i|_{(J_\text{skel},J_\text{skel})}\\
&= -A_i|_{(J_\text{skel},J_i)} (A_i)_{(J_i,J_i)}^{-1}   (A_i)_{(J_i,J_\text{skel})}
+ A_i|_{(J_\text{skel},J_\text{skel})}
\end{align*}
so that, 
\begin{align*}
P^T A_i P&=(P)|_{(J_\text{skel},:)}^T (A_i P)|_{(J_\text{skel},:)} 
+(P)|_{(J_i,:)}^T (A_i P)|_{(J_i,:)}\\
&=(P)|_{(J_\text{skel},:)}^T (A_i P)|_{(J_\text{skel},:)}  = A_i|_{(J_\text{skel},J_\text{skel})} - 
A_i|_{(J_\text{skel},J_i)} (A_i)_{(J_i,J_i)}^{-1}   (A_i)_{(J_i,J_\text{skel})}.
\end{align*}
Likewise, by definition \eqref{eq:skel_Ji}, $B_{(\cdot,J_j)}=0$ for all $j=1,\ldots m$, so that
\[
BP=B_{(:,J_\text{skel})} P_{(J_\text{skel},:)}==B_{(:,J_\text{skel})}.
\]
}
\end{proof}

The following lemma allows us to calculate the decrease of the Tanabe-Todd-Ye potential function \eqref{eq:Tanabe}
for a dual-scaling step that updates only the dual variable.

\begin{lemma}\label{lemma:potential_decrease}
Let $\tilde X_j$ be strictly primal feasible, and $\tilde X_{j+1}=\tilde X_j$. Let $\eta_j$ and $\eta_{j+1}$ be strictly dual feasible. 
Then, with $\overline z_{j}=\langle \tilde C,\tilde X_j \rangle$,
\begin{align}\label{eq:Psi_change}
\Psi(\tilde X_{j+1},\tilde S_{\eta_{j+1}})
&= \Psi(\tilde X_j,\tilde S_{\eta_j}) +\rho\ln\left(  \frac{\overline z_j-b^T\eta_{j+1}} {\overline z_j-b^T\eta_j} \right)
-\ln\left( \frac{\det\tilde S_{\eta_{j+1}}}{\det\tilde S_{\eta_j}} \right).
\end{align}
Moreover, for all strictly dual feasible $y\in \Adm$, the determinant of the dual slack matrix satisfies
\begin{align*}
\ln\det\tilde S_y
&= \ln\det A_y + \ln\det(\hat Y-F(y)) -\ln\det\hat Y + \sum_{i=1}^m \ln(y_i^+-y_i) + \sum_{i=1}^m\ln(y_i-y_i^-).
\end{align*}
\end{lemma}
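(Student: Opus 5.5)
The first identity \eqref{eq:Psi_change} follows by evaluating the definition \eqref{eq:Tanabe} at both iterates and subtracting. Since $\tilde X_{j+1}=\tilde X_j$, the terms $-\ln\det\tilde X$ cancel in the difference. We are left with the change in $\rho\ln\langle\tilde X,\tilde S\rangle$ and the change in $-\ln\det\tilde S$. Weak duality gives, for any strictly dual feasible $\eta$,
\[
\langle \tilde X_j,\tilde S_\eta\rangle=\langle\tilde C,\tilde X_j\rangle-\sum_{i=1}^m\eta_i\langle\tilde A_i,\tilde X_j\rangle=\overline z_j-b^T\eta .
\]
This uses primal feasibility, i.e.\ $\tilde{\mathcal A}(\tilde X_j)=b$. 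Applying it with $\eta=\eta_j$ and $\eta=\eta_{j+1}$ and taking logarithms gives the $\rho\ln$ ratio term. The logarithm is well defined because both inner products are positive, as $\tilde X_j\succ0$ and $\tilde S_\eta\succ 0$.

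For the determinant formula, use the block-diagonal structure \eqref{eq:tilde_A_and_C}:
\[
\tilde S_y=\tilde C-\tilde A_y=\operatorname{diag}\bigl(A_y-C,\ \operatorname{diag}(y^+-y),\ \operatorname{diag}(y-y^-)\bigr).
\]
Hence $\ln\det\tilde S_y=\ln\det(A_y-C)+\sum_i\ln(y^+_i-y_i)+\sum_i\ln(y_i-y^-_i)$, where all factors are positive by strict feasibility. It remains to rewrite $\det(A_y-C)$. The Schur determinant identity \eqref{eq:det_Schur} of Lemma~\ref{lemma:Schur_argument}, with $Y=\hat Y$, gives $\det A_y\det(\hat Y-F(y))=\det\hat Y\det(A_y-C)$. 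Dividing by $\det\hat Y>0$ and taking logarithms yields the claim. Positivity of $\det(\hat Y-F(y))$ follows from Lemma~\ref{lemma:Schur_argument}, since $A_y\succ C$ iff $F(y)\prec\hat Y$.

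No step is a real obstacle. The only point needing care is to justify that all logarithms have positive arguments, and this is supplied by strict feasibility together with Lemma~\ref{lemma:Schur_argument}.
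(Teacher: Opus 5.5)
Your proposal is correct and follows the same route as the paper. The first identity comes from the definition of $\Psi$, using the cancellation of the $\ln\det\tilde X$ terms and $\langle\tilde X_j,\tilde S_\eta\rangle=\overline z_j-b^T\eta$; the second comes from the block-diagonal structure of $\tilde S_y$ together with the Schur determinant identity \eqref{eq:det_Schur}. You also make explicit the feasibility identity and the positivity of all logarithm arguments, which the paper leaves implicit.
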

\begin{proof}
Equation \eqref{eq:Psi_change} immediately follows from the definition of the Tanabe-Todd-Ye potential function \eqref{eq:Tanabe}.
For the determinant of the dual slack matrix, we obtain from the block structure of $\tilde S_y$ that
\[
\ln\det\tilde S_y=\ln \det (A_y-C) + \sum_{i=1}^m \ln(y_i^+-y_i) + \sum_{i=1}^m\ln(y_i-y_i^-),
\]
so that the Schur complement identity from Lemma~\ref{lemma:Schur_argument} yields the assertion.
\end{proof}

\kommentar{
\begin{remark}
In our setting, the Tanabe-Todd-Ye potential function \eqref{eq:Tanabe} can be calculated without explicitly constructing the primal iterates. First note, that for strictly dual feasible $y\in \Adm$, the determinant of the dual slack matrix can be calculated using the block structure of $\tilde S_y$, and the Schur complement identity from Lemma~\ref{lemma:Schur_argument}: 
\begin{align*}
\ln\det\tilde S_y&=\ln \det (A_y-C) + \sum_{i=1}^m \ln(y_i^+-y_i) + \sum_{i=1}^m\ln(y_i-y_i^-)\\
&= \ln\det A_y + \ln\det(\hat Y-F(y)) -\ln\det\hat Y + \sum_{i=1}^m \ln(y_i^+-y_i) + \sum_{i=1}^m\ln(y_i-y_i^-).
\end{align*}

For the initial values $\tilde X_0=\tilde X_{y_k}^{\epsilon_k}$, and $\eta_0=y_k^{\epsilon_k}$, we obtain from the identity of non-zero eigenvalues of $BA_{y_k}^{-2}B^T$ and $A_{y_k}^{-1}B^T B A_{y_k}^{-1}$
\begin{align*}
\det\tilde X_0
&=\epsilon_k^{\tilde n-l} \det\left( \epsilon_k I+BA_{y_k}^{-2}B^T \right) \prod_{i=1}^m(1+\trace A_i).
\end{align*}
Consequently,
\begin{align*}
\Psi(\tilde X_0,\tilde S_{\eta_0})&=\rho\ln(\overline z_0-b^T\eta_0)-(\tilde n-l)\ln\epsilon_k
 -\ln\det\left(\epsilon_k I+BA_{y_k}^{-2}B^T\right) \\
& \quad {}-\sum_{i=1}^m\ln(1+\trace A_i)
 -\ln\det A_{\eta_0}-\ln\det(\hat Y-F(\eta_0))+\ln\det\hat Y\\
& \quad -\sum_{i=1}^m\ln(y_i^+-\eta_{0,i})-\sum_{i=1}^m\ln(\eta_{0,i}-y_i^-).
\end{align*}

The dual scaling iteration either updates the dual variable or the dual bound, where the latter corresponds to an update of the primal variable. In the case that only the dual variable is updated as in \eqref{eq:SDP_update_eta}, $\tilde X_{j+1}=\tilde X_j$, and we obtain
\begin{align*}
\Psi(\tilde X_{j+1},\tilde S_{\eta_{j+1}})
&= \Psi(\tilde X_j,\tilde S_{\eta_j}) +\rho\ln\left(  \frac{\overline z_j-b^T\eta_{j+1}} {\overline z_j-b^T\eta_j} \right)
-\ln\left( \frac{\det\tilde S_{\eta_{j+1}}}{\det\tilde S_{\eta_j}} \right).
\end{align*}

Likewise, if only the upper bound is updated as in \eqref{eq:SDP_update_z}, then $\eta_{j+1}=\eta_j$, and we obtain from \eqref{eq:dualscaling_update_X}
\begin{align*}
\det \tilde X_{j+1}
=\frac{(\overline z_j-b^T \eta_j)^{\tilde n}}{\rho^{\tilde n}} \det \tilde S_{\eta_j}^{-1} \det (\tilde{\mathcal A}^* d_{\eta_j,\overline z_j} + \tilde S_{\eta_j})\det \tilde S_{\eta_j}^{-1}.
\end{align*}
Using $\tilde{\mathcal A}^* d_{\eta_j,\overline z_j} + \tilde S_{\eta_j}=\tilde S_{\eta_j-d_{\eta_j,\overline z_j}}$
gives
\begin{align*}
\Psi(\tilde X_{j+1},\tilde S_{\eta_{j+1}})
&= \rho\ln(\overline z_{j+1}-b^T\eta_j) -\tilde n\ln\left(  \frac{\overline z_j-b^T\eta_j}{\rho} \right)
-\ln\left( \frac{  \det\tilde S_{\eta_j-d_{\eta_j,\overline z_j}} }{  \det\tilde S_{\eta_j} } \right).
\end{align*}
\end{remark}
}

\end{appendices}

\bibliography{literaturliste}

\end{document}